\documentclass[12pt,psfig,reqno]{amsart}
\usepackage{}
\usepackage{epsfig}
\usepackage{graphicx}
\usepackage{verbatim}
\usepackage{amssymb}
\usepackage{latexsym}
\usepackage{amsmath}
\usepackage{amsfonts}
\usepackage{amsbsy}
\usepackage{amscd}
\usepackage[usenames]{color} %\color{Red}

 \setlength{\textwidth}{15.0cm}
 \setlength{\textheight}{22.0cm}
 \hoffset=-1cm
\pagestyle {plain}

\newtheorem{prop}{Proposition}[section]
\newtheorem{lem}[prop]{Lemma}

\newtheorem{defi}{Definition}[section]
\newtheorem{cor}[prop]{Corollary}
\newtheorem{thm}[prop]{Theorem}

\newtheorem{exam}{Example}[section]

\begin{document}
\title{Spectral property of self-affine measures on ${\mathbb R}^n$}

\author{Jing-Cheng Liu and Jun Jason Luo}
\address{Key Laboratory of High Performance Computing and Stochastic Information Processing (Ministry of Education of China), College of Mathematics and
Computer Science, Hunan Normal University, Changsha, Hunan 410081, China} \email{liujingcheng11@126.com}
\address{College of Mathematics and Statistics, Chongqing University, Chongqing, 401331, China}\email{jun.luo@cqu.edu.cn}

\thanks{The research is supported in part by the NNSF of China (No.11171100, No.11301175, No.11301322, No.11571104), the program for excellent talents in Hunan Normal University (No. ET14101), Specialized Research Fund for the Doctoral Program of Higher Education of China (20134402120007),  the Fundamental and Frontier Research Project of Chongqing (No.cstc2015jcyjA00035).}

\date{\today}
\keywords{Self-affine measure, spectral measure, Hadamard triple, Fourier transform}
\subjclass[2010]{Primary 42C05; Secondary 28A80.}

\begin{abstract}  We study spectral properties of the self-affine measure $\mu_{M,\mathcal {D}}$ generated by an expanding integer  matrix $M\in M_n(\mathbb{Z})$ and  a consecutive collinear digit set $\mathcal {D}=\{0,1,\dots,q-1\}v$ where $v\in \mathbb{Z}^n\setminus\{0\}$ and $q\ge 2$ is an integer. Some sufficient conditions for $\mu_{M,\mathcal {D}}$  to be a spectral measure or to have infinitely many orthogonal exponentials are given. Moreover, for some special cases, we can obtain a necessary and  sufficient condition on the spectrality of $\mu_{M,\mathcal {D}}$. Our study generalizes the one dimensional results proved by Dai, {\it et al.} (\cite{Dai-He-Lai_2013, Dai-He-Lau_2014}).
\end{abstract}

\maketitle

\section{Introduction}
Let $\mu$ be a Borel probability measure with compact support on $\mathbb{R}^n$, We call $\mu$ a \emph{spectral measure} if there exists a discrete set $\Lambda\subset\mathbb{R}^n$ such that the set of complex exponentials $E(\Lambda):=\{e^{2\pi i<\lambda,x>}:\lambda \in \Lambda\}$ forms an orthonormal basis for $L^2(\mu)$.  The set $\Lambda$ is called a \emph{spectrum} for $\mu$.  The interest of spectral measures was initialled by Fuglede \cite{Fuglede1974} and his famous conjecture: $\chi_\Omega dx$ is a spectral measure on ${\mathbb R}^n$ if and only if $\Omega$ is a translational tile.  Later it was proved that the conjecture is false in both directions on ${\mathbb R}^n$ for $n\ge 3$ (\cite{Tao2004}, \cite{Kolountzakis-Matolcsi1}, \cite{Kolountzakis-Matolcsi2}); but it is still open for $n=1,2$. Moreover, the problem of spectral measures is also very attractive when we replace the Lebesgue measure  $\mu$ by  fractal measures such as self-similar/affine measures.

Let $M\in M_n(\mathbb{Z})$ be an expanding matrix (i.e., all the moduli of eigenvalues of $M$ are strictly larger than one)  and $\mathcal {D}\subset \mathbb{Z}^n$ be a finite digit set. Then the maps
\begin{equation*}
\phi_d(x)= M^{-1}(x + d),\quad d\in \mathcal {D}
\end{equation*}
are contractive with respect to a suitable norm in $\mathbb{R}^n$ \cite{Lagarias-Wang_1996} and it is well known that there is a unique nonempty compact set $T:=T(M,\mathcal{D})$ satisfying the set-valued equation \cite{Hutchinson_1981}:
\begin{equation*}
T=\bigcup_{d\in \mathcal {D}} \phi_d(T).
\end{equation*}

Such $T$ is called the \textit{self-affine set} (or \textit{attractor}) of the {\it iterated function system (IFS)} $\{\phi_d\}_{d\in \mathcal {D}}$. A self-affine set  can be equipped with a unique invariant probability measure $\mu:= \mu_{M,\mathcal {D}}$ defined by
\begin{equation*}
\mu= \frac{1}{|\mathcal {D}|}\sum_{d\in \mathcal {D}}\mu\circ \phi_d^{-1},
\end{equation*}
and  $\mu_{M,\mathcal {D}}$ is supported on $T$. We call $\mu_{M,\mathcal {D}}$ a \emph{self-affine measure} \cite{Hutchinson_1981}. In particular,  if $M$ is a multiple of an orthonormal matrix,  $T$ and $\mu_{M,\mathcal {D}}$ are often called {\it self-similar set} and {\it self-similar measure}, respectively.

It is natural to ask whether $\mu_{M,\mathcal {D}}$ is a spectral measure.  Jorgenson and Pederson \cite{Jorgenson-Pederson_1985} first studied the spectral property of  certain Cantor measures. It was found that the $1/4$-Cantor measure $\mu_{1/4}$ on $\mathbb{R}$ is a spectral measure while the $1/3$-Cantor measure $\mu_{1/3}$ is not.
Hu and Lau \cite{Hu-Lau_2008} further investigated the spectrality of Bernoulli convolutions $\mu_{\rho}$, and observed that $L^2(\mu_{\rho})$ contains an infinite orthonormal set of exponential functions if and only if the contraction ratio $\rho$ is the $n$-th root of a fraction $p/q$ where $p$ is odd and $q$ is even. Recently, Dai \cite{Dai_2012} completely settled the problem that the only spectral Bernoulli convolutions are of the contraction ratio $1/2k$. As a generalization of the Bernoulli convolution, Dai, He and Lai \cite{Dai-He-Lai_2013} studied the self-similar measure generated by a positive integer $b$ and consecutive digits $\mathcal {D}=\{0,1,\dots, q-1\}$, it was proved that $L^2(\mu_{b,\mathcal {D}})$ has infinitely many orthogonal exponentials if and only if  $\gcd(q,b)>1$, and $\mu_{b,\mathcal {D}}$ is a spectral measure provided $q|b$.  After that, replacing the integer $b$ by any real number $b'>1$, Dai, He and Lau \cite{Dai-He-Lau_2014} showed  that $\mu_{b', {\mathcal D}}$ is a spectral measure if and only if $b'\in {\mathbb N}$ and $q| b'$. For more general cases such as Moran measures, we refer to  \cite{An-He-Lau_2015, An-He_2014}.

However, there are few results on the spectrality of self-affine measures (\cite{Dutkay-Han-Jorgensen_2009}-\cite{Dutkay-Jorgensen2007}, \cite{Li_2008}-\cite{Li-Wen_2012}), among which  Li (\cite{Li_2008}-\cite{Li-Wen_2012}) mainly  considered the spectrality or non-spectrality of some self-affine measures in low dimensions. For example, Li and Wen  \cite{Li-Wen_2012} studied the measure $\mu_{M,\mathcal {D}}$ generated by $M\in M_2(\mathbb{Z})$ and $\mathcal {D}=\{0,1\}v$, where $v\in\mathbb{Z}^2\setminus\{0\}$. They obtained that $\mu_{M,\mathcal {D}}$ admits an infinite orthonormal set if $\det(M)$ is even; if $v$ is the eigenvector of $M$ with eigenvalue $\ell$, then  $\mu_{M,\mathcal {D}}$ is a spectral measure if and only if $\ell$ is even.

Motivated by the above results, in this paper, we study the spectrality of self-affine measures on ${\mathbb R}^n$ with consecutive collinear digits. First we need a decomposition of integer matrices, which may be known but we can not find the reference, so we provide a proof in Section 3 (see Lemma \ref{th(3.6)}).

Let $M\in M_n(\mathbb{Z}), v\in \mathbb{Z}^n\setminus\{0\}$, and let $r$ be the rank of the set of vectors $\{v, Mv, \dots, M^{n-1}v\}$.  Then there exists a unimodular matrix $B\in M_n(\mathbb{Z})$ such that $Bv=(x_1,\dots x_r, 0,\dots,0)^t$ and
\begin{equation}\label{eq(1.1)}
BMB^{-1}=\begin{bmatrix}
 M_1& C \\
 0& M_2
\end{bmatrix}
\end{equation}
where $M_1\in M_r(\mathbb{Z})$, $M_2\in M_{n-r}(\mathbb{Z})$ and $C\in M_{r,n-r}(\mathbb{Z})$. By making use of this matrix decomposition, we have the main theorem of the paper.

\begin{thm} \label{th(1.1)}
Let $M\in M_n(\mathbb{Z})$ be an expanding matrix, $\mathcal {D}=\{0,1,\dots, q-1\}v$ be a digit set where $q\ge 2$ is an integer and $v\in\mathbb{Z}^n\setminus\{0\}$. If the set of  vectors $\{v, Mv, \dots, M^{n-1}v\}$ has rank $r$ and $M, M_1$ are expressed as in \eqref{eq(1.1)}, then

(i) $\mu_{M,\mathcal {D}}$ has infinitely many orthogonal exponentials if $\gcd(q,\det(M_1))>1$;

(ii) $\mu_{M,\mathcal {D}}$ is a spectral measure if $q|\det(M_1)$.
\end{thm}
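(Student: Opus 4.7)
My plan is to use the decomposition in \eqref{eq(1.1)} to reduce the problem to the full-rank case $r=n$, and then handle (i) by exhibiting infinitely many zeros of $\hat\mu_{M,\D}$ in an arithmetic progression and (ii) by constructing a Hadamard triple $(M,\D,L)$ from which a spectrum can be assembled.

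For the reduction, set $\wt M:=BMB^{-1}$ and $\wt\D:=B\D$. Since $B\in GL_n(\Z)$, the pushforward identity $B_\ast\mu_{M,\D}=\mu_{\wt M,\wt\D}$ and the dual change of variable $\xi\mapsto B^{-t}\xi$ (which preserves $\Z^n$) imply that both the existence of infinitely many orthogonal exponentials and spectrality transfer between $\mu_{M,\D}$ and $\mu_{\wt M,\wt\D}$. Writing $B\v=(v',0)^t$ with $v'\in\Z^r$, the block upper-triangular form of $\wt M$ gives $\phi_{\wt d}(x',x'')=(M_1^{-1}(x'+jv')-M_1^{-1}CM_2^{-1}x'',\,M_2^{-1}x'')$, so the last $n-r$ coordinates are contracted to the unique fixed point $0$ of $x''\mapsto M_2^{-1}x''$ and the attractor of $(\wt M,\wt\D)$ lies in $\R^r\times\{0\}$. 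Consequently $\mu_{\wt M,\wt\D}=\mu_{M_1,\D_1}\times\delta_0$ with $\D_1:=\{0,1,\dots,q-1\}v'$; since $L^2(\delta_0)\cong\mathbb{C}$ the two spectral questions are equivalent for $\mu_{\wt M,\wt\D}$ and $\mu_{M_1,\D_1}$, and the rank condition descends to $\{v',M_1v',\dots,M_1^{r-1}v'\}$ spanning $\R^r$. Thus I may assume $r=n$ and write $(M,\v)$ for $(M_1,v')$.

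For (i), the mask $\M_\D(\xi)=q^{-1}\sum_{j=0}^{q-1}e^{2\pi ij\langle\v,\xi\rangle}$ vanishes precisely on $\mathcal Z:=\{\xi:\langle\v,\xi\rangle\in(\tfrac{1}{q}\Z)\setminus\Z\}$, so two exponentials $e_{\lambda_1},e_{\lambda_2}\in L^2(\mu_{M,\D})$ are orthogonal iff $(M^t)^{-k}(\lambda_1-\lambda_2)\in\mathcal Z$ for some $k\ge 1$. Under $p:=\gcd(q,\det M)>1$ the matrix $M^t$ is singular modulo $p$, so there exists $u\in\Z^n\setminus p\Z^n$ with $M^tu\equiv 0\pmod p$; combined with the full-rank condition, which forbids $\langle\v,\cdot\rangle$ from vanishing on any $M^t$-invariant subspace because $\bigcap_{k\ge 0}(M^k\v)^\perp=\{0\}$, this should let me build an infinite set $\{\lambda_n\}\subset\Z^n$ whose pairwise differences all lie in $\bigcup_k(M^t)^k\mathcal Z$. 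This is the natural multi-dimensional analogue of the construction in \cite{Dai-He-Lai_2013}.

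For (ii), I would produce $\ell\in\Z^n$ with $\langle\v,M^{-t}\ell\rangle\equiv\tfrac{1}{q}\pmod 1$ and set $L:=\{j\ell:0\le j<q\}$; the mask matrix $\bigl(q^{-1/2}e^{2\pi i\langle M^{-1}d,\ell'\rangle}\bigr)_{d\in\D,\,\ell'\in L}$ is then the $q\times q$ Fourier matrix (up to scaling), so $(M,\D,L)$ is a Hadamard triple. One then forms the candidate spectrum $\Lambda=\bigl\{\sum_{k=0}^N(M^t)^k\ell_{i_k}:N\ge 0,\ i_k\in\{0,\dots,q-1\}\bigr\}$ and verifies $\sum_{\lambda\in\Lambda}|\hat\mu_{M,\D}(\xi+\lambda)|^2\equiv 1$ by the Jorgensen--Pedersen criterion, checking a no-cycle condition along the way. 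The hardest step is the existence of $\ell$: rewriting $\langle\v,M^{-t}\ell\rangle=(\det M)^{-1}\langle\operatorname{adj}(M)\v,\ell\rangle$, one must show that under $q\mid\det M$ and the full-rank hypothesis the integer $(\det M)/q$ lies in the subgroup $\langle\operatorname{adj}(M)\v,\Z^n\rangle+\det(M)\Z$ of $\Z$. This is where Lemma \ref{th(3.6)} should enter decisively, by producing the needed integer relation between $\operatorname{adj}(M)\v$ and the iterates $\v,M\v,\dots,M^{n-1}\v$.
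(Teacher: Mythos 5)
Your reduction of the rank-deficient case to the full-rank case is correct and is essentially what the paper does in Theorem \ref{th(3.7)} (the paper phrases it through the identity $\hat\mu_{\widetilde M,\widetilde{\mathcal D}}(\xi)=\hat\mu_{M_1,\mathcal D'}(\xi')$ rather than the product decomposition $\mu_{M_1,\mathcal D'}\times\delta_0$, but these are the same observation). The genuine gap is in the full-rank case itself: you work in the original coordinates, and the step you are missing is the paper's Lemma \ref{th(3.1)}, which conjugates $M$ by the (generally \emph{non-unimodular}) integer matrix $B=[M^{n-1}v,\dots,Mv,v]$ into the companion matrix of its characteristic polynomial, with $B^{-1}v=(0,\dots,0,1)^t$. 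Without this conjugation your constructions can fail outright. For (ii), take $n=2$, $M=\begin{bmatrix}2&1\\0&3\end{bmatrix}$, $v=(1,3)^t$, $q=3$: then $\{v,Mv\}$ is independent and $q\mid\det M=6$, but $M^{-1}v=(0,1)^t$, so $\langle v,M^{-t}\ell\rangle=\langle M^{-1}v,\ell\rangle=\ell_2\in\mathbb Z$ for \emph{every} $\ell\in\mathbb Z^2$. Hence no integral $\ell$ with $\langle v,M^{-t}\ell\rangle\equiv 1/q\pmod 1$ exists and your Hadamard triple $(M,\mathcal D,L)$ simply does not exist in the original basis, even though the measure is spectral. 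The paper's conjugated system does admit the integral dual set $\widetilde S=\{0,\dots,q-1\}(-a_n/q,0,\dots,0)^t$, and spectrality is carried back by Lemma \ref{th(2.2)}. (Also, once a Hadamard triple is in hand you should simply cite Lemma \ref{th(2.1)} (Dutkay--Haussermann--Lai); ``verifying $\sum_{\lambda}|\hat\mu(\xi+\lambda)|^2\equiv1$ and checking a no-cycle condition along the way'' is not a routine verification --- it is the content of that theorem.)

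The same example defeats your sketch of (i). Modulo $p=3$ one has $M^t\equiv\begin{bmatrix}2&0\\1&0\end{bmatrix}$, so $\ker(M^t\bmod 3)=\{(0,t)^t\}$, and $\langle v,(0,1)^t\rangle=3\equiv0\pmod 3$; thus every $u$ with $M^tu\equiv0\pmod p$ has $p\mid\langle v,u\rangle$ and $u/p\notin\mathcal Z$. Your appeal to $\bigcap_{k}(M^kv)^\perp=\{0\}$ is an identity over $\mathbb R$ that does not descend to $\mathbb F_p$ (here $\langle M^kv,(0,1)^t\rangle\equiv0\pmod 3$ for all $k$), so the sentence ``this should let me build an infinite set'' is precisely where the proof is missing; indeed in this example no admissible $\alpha$ exists with one application of $M^t$, and only $M^{*2}\alpha\in\mathbb Z^2$ can be arranged. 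In the companion basis the difficulty evaporates: $\alpha=(0,\dots,0,1/s)^t$ with $s=\gcd(q,\det M)$ lies in the zero set of the conjugated mask and $\widetilde M^{*}\alpha=(-a_n/s,0,\dots,0)^t\in\mathbb Z^n$, so Li's criterion (Lemma \ref{th(2.3)}) applies at once. I recommend you establish Lemma \ref{th(3.1)} first; after that both parts reduce to short computations plus citations of Lemmas \ref{th(2.1)}--\ref{th(2.3)}.
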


In particular, if the characteristic polynomial of  $M_1$ is of the simple form $f(x)=x^r+c$, then we have the following necessary and sufficient  condition on  the spectrality of $\mu_{M,\mathcal {D}}$.

\begin{thm} \label{th(1.2)}
Under the same assumption of Theorem \ref{th(1.1)}.  If  the characteristic polynomial of  $M_1$ is $f(x)=x^r+c$, then

(i) $\mu_{M,\mathcal {D}}$ has infinitely many orthogonal exponentials if and only if $\gcd(q,\det(M_1))>1$;

(ii) $\mu_{M,\mathcal {D}}$ is a spectral measure if and only if $q|\det(M_1)$.
\end{thm}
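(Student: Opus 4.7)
The ``if'' directions in both parts follow from Theorem~\ref{th(1.1)}, so the plan attacks only the converses. My strategy is: (1) reduce to an $r$-dimensional problem via \eqref{eq(1.1)}; (2) use Cayley--Hamilton to describe the zero set $\mathcal{Z}(\hat\mu_1)$ after a linear change of coordinates; (3) prove the necessity of (i) by the infinite Ramsey theorem combined with pigeonhole; (4) derive the necessity of (ii) from (i) in the coprime case, and invoke the self-similar structure of $\mu_1$ in the intermediate range of $\gcd(q,c)$, which is the main obstacle.

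Using the transformation $B$ from \eqref{eq(1.1)}, I may take $v=(v_1,\ldots,v_r,0,\ldots,0)^t$ and $M$ itself in the block form, so that $\mu_{M,\D}$ is supported on $\R^r\times\{0\}$ and the questions of spectrality and infinite orthogonality reduce to the same questions for the $r$-dimensional measure $\mu_1:=\mu_{M_1,\D'}$ on $\R^r$, where $v'=(v_1,\ldots,v_r)^t$ and $\D'=\{0,1,\ldots,q-1\}v'$. By Cayley--Hamilton, $M_1^r=-cI$; writing $k=rs+j$ with $0\le j<r$ gives $M_1^{-k}v'=(-c)^{-s}M_1^{-j}v'$. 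The rank hypothesis forces $\{v',M_1^{-1}v',\ldots,M_1^{-(r-1)}v'\}$ to be a basis of $\R^r$ (since $\{v',M_1 v',\ldots,M_1^{r-1}v'\}$ is), so the linear map $T\xi:=(q\langle M_1^{-j}v',\xi\rangle)_{j=0}^{r-1}$ is invertible, and
\[
T(\mathcal{Z}(\hat\mu_1))=\bigcup_{\substack{s\ge 0,\,0\le j\le r-1\\ rs+j\ge 1}}\{t\in\R^r:t_j\in(-c)^s(\Z\setminus q\Z)\}.
\]

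For the necessity of (i), assume $\gcd(q,c)=1$. Since multiplication by $(-c)^s$ is invertible modulo $q$, one has $(-c)^s(\Z\setminus q\Z)\subseteq\Z\setminus q\Z$, whence $T(\mathcal{Z}(\hat\mu_1))\subseteq\bigcup_{j=0}^{r-1}\{t\in\R^r:t_j\in\Z\setminus q\Z\}$. Suppose for contradiction that $\Lambda\subset\R^r$ is an infinite set of mutually orthogonal exponentials; for every pair $\{\lambda,\lambda'\}\subset\Lambda$, some coordinate $(T\lambda-T\lambda')_j$ lies in $\Z\setminus q\Z$. Color each pair by such an index $j\in\{0,\ldots,r-1\}$; by the infinite Ramsey theorem for pairs with $r$ colors, there is an infinite monochromatic subset $\Lambda_\ast\subseteq\Lambda$ of color $j_\ast$. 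Then $\{(T\lambda)_{j_\ast}:\lambda\in\Lambda_\ast\}$ lies in a single coset of $\Z$ and its elements are pairwise distinct modulo $q$, forcing $|\Lambda_\ast|\le q$, a contradiction.

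For (ii), the coprime case $\gcd(q,c)=1$ follows from (i), as a measure with only finitely many orthogonal exponentials cannot be spectral on the infinite-dimensional space $L^2(\mu_1)$. In the intermediate case $1<d:=\gcd(q,c)<q$, write $q=dq_1$, $c=dc_1$ with $\gcd(q_1,c_1)=1$ and $q_1\ge 2$. Infinitely many orthogonal exponentials exist by Theorem~\ref{th(1.1)}, so the Ramsey step no longer applies; the plan is to combine the self-similarity $\hat\mu_1(M_1^t\eta)=m_{\D'}(\eta)\hat\mu_1(\eta)$ with the Parseval identity $\sum_\lambda|\hat\mu_1(\xi-\lambda)|^2\equiv 1$ and to run the Jorgensen--Pedersen iteration in the spirit of \cite{Dai-He-Lau_2014}, extracting a first-level labeling set $L\subset\Z^r$ with $|L|=q$ such that $q\langle M_1^{-1}v',\ell-\ell'\rangle\in\Z\setminus q\Z$ for all distinct $\ell,\ell'\in L$. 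Writing $w:=\operatorname{adj}(M_1)v'\in\Z^r$ so that $M_1^{-1}v'=\pm w/c$, this condition rewrites as $c_1\mid\langle w,\ell-\ell'\rangle$ together with $d\nmid\langle w,\ell-\ell'\rangle/c_1$; hence $\ell\mapsto\langle w,\ell\rangle/c_1\bmod d$ is injective on $L$, forcing $|L|\le d<q$, a contradiction. The main obstacle is precisely this intermediate case, which requires the finer self-similar/Hadamard-triple structure of a putative spectrum, beyond the Cayley--Hamilton and Ramsey ingredients that suffice for (i).
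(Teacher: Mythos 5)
Your reduction to the $r$-dimensional measure $\mu_1=\mu_{M_1,\mathcal{D}'}$, the Cayley--Hamilton description of $T(\mathcal{Z}(\hat\mu_1))$, and the Ramsey/pigeonhole proof of the necessity in (i) are all correct; this part is essentially the paper's argument (the paper iterates a subsequence-extraction $n$ times through the sets $\mathcal{Z}_1,\dots,\mathcal{Z}_n$, which is the same idea your Ramsey coloring packages more cleanly). The coprime case of (ii) then follows, as you say.

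The genuine gap is in the intermediate case $1<d=\gcd(q,c)<q$ of (ii). Your plan rests on the claim that a spectrum $\Lambda\ni 0$ of $\mu_1$ must contain a ``first-level labeling set'' $L$ with $|L|=q$ and $q\langle M_1^{-1}v',\ell-\ell'\rangle\in\Z\setminus q\Z$ for all distinct $\ell,\ell'\in L$. This is not justified and is precisely the hard point: the orthogonality of $E(\Lambda)$ only forces each difference $\lambda-\lambda'$ to lie in $\bigcup_{k\ge 1}\{\xi: q\langle M_1^{-k}v',\xi\rangle\in\Z\setminus q\Z\}$, and nothing a priori forces $q$ elements of $\Lambda$ to have all their pairwise differences concentrated at the level $k=1$ (nor forces the relevant inner products to be integral). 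Such structure theorems for spectra of self-similar measures are exactly what make the one-dimensional results of Dai--He--Lai/Lau delicate, and ``run the Jorgensen--Pedersen iteration in the spirit of \cite{Dai-He-Lau_2014}'' does not supply one. The paper sidesteps this entirely: writing $q=q'd$, $c=c'd$ and $\widetilde{\mathcal D}=\widetilde{\mathcal D}_1\oplus\widetilde{\mathcal D}_2$ with $\widetilde{\mathcal D}_1=\{0,\dots,d-1\}\widetilde v$ and $\widetilde{\mathcal D}_2=d\{0,\dots,q'-1\}\widetilde v$, it factors $\mu_{\widetilde M,\widetilde{\mathcal D}}=\mu_1\ast\mu_2$ where $\mu_2=\delta_{\widetilde M^{-(n+1)}\widetilde{\mathcal D}_2}$ is a single discrete factor, checks the zero-set inclusion $\mathcal{Z}(\hat\mu_2)\subset\mathcal{Z}(\hat\delta_{\widetilde M^{-1}\widetilde{\mathcal D}})$ (using $\gcd(c',q')=1$) so that $\mathcal{Z}(\hat\mu_{\widetilde M,\widetilde{\mathcal D}})=\mathcal{Z}(\hat\mu_1)$, and then applies Lemma \ref{th(3.3)}: every orthogonal set of $\mu_1\ast\mu_2$ is an orthogonal set of $\mu_1$ and hence cannot be complete for $\mu_1\ast\mu_2$. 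You would need either to adopt this convolution argument or to actually prove the spectral structure theorem you are invoking; as written, the non-spectrality in the range $1<\gcd(q,\det(M_1))<q$ is not established.
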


According to the dependence of the set of vectors $\{v, Mv, \dots, M^{n-1}v\}$, we will prove the above theorems by two steps: $r=n$ (Theorems \ref{th(3.2)}, \ref{th(3.4)})  and $r<n$ (Theorem \ref{th(3.7)}). The main idea is to use the techniques of linear algebra and matrix analysis.

If the rank $r=1$, then $v$ becomes an eigenvector of $M$ and  $M_1=\ell$ with  characteristic polynomial $f(x)=x-\ell$, by Theorem \ref{th(1.2)},  the following corollary  is immediate.

\begin{cor}
Under the same assumption of Theorem \ref{th(1.1)}. If $r=1$, in this case $v$ is an eigenvector of $M$ and  $M_1=\ell$ is the corresponding eigenvalue, then

(i) $\mu_{M,\mathcal {D}}$ has infinitely many orthogonal exponentials if and only if $\gcd(q, \ell)> 1$;

(ii) $\mu_{M,\mathcal {D}}$ is a spectral measure if and only if  $q|\ell$.
\end{cor}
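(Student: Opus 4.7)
The plan is to reduce the corollary directly to Theorem \ref{th(1.2)}; the content is essentially unpacking what the decomposition (\ref{eq(1.1)}) says in the very degenerate case $r=1$.

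First I would verify the parenthetical assertion that $v$ is an eigenvector of $M$. Since $r=1$ means $\{v, Mv,\dots, M^{n-1}v\}$ has rank $1$, in particular $Mv$ is a scalar multiple of $v$, so $Mv = \ell v$ for some scalar $\ell$. Applying $B$ from (\ref{eq(1.1)}) to this identity gives
\[
(BMB^{-1})(Bv) = \ell (Bv),
\]
and with $Bv = (x_1,0,\dots,0)^t$ the left-hand side equals $(M_1 x_1, 0,\dots,0)^t$ because $M_1\in M_1(\Z)$ is just the $1\times 1$ block. Comparing entries forces $M_1 = \ell$, so $\ell$ is an integer eigenvalue of $M$ and $M_1$ (viewed as a $1\times 1$ integer matrix) coincides with it.

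Next, the characteristic polynomial of $M_1$ is $f(x)=x-\ell$, which is exactly of the special form $x^r + c$ with $r=1$ and $c=-\ell$ appearing in the hypothesis of Theorem \ref{th(1.2)}. Also $\det(M_1)=\ell$. Plugging these into Theorem \ref{th(1.2)}(i)--(ii) immediately yields: $\mu_{M,\D}$ has infinitely many orthogonal exponentials iff $\gcd(q,\det(M_1)) = \gcd(q,\ell) > 1$, and $\mu_{M,\D}$ is a spectral measure iff $q\mid \det(M_1) = \ell$.

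There is no real obstacle here; the only thing one has to be a little careful about is to confirm that the scalar $\ell$ produced abstractly by the decomposition (\ref{eq(1.1)}) is the same as the eigenvalue attached to $v$, and that it is automatically an integer. Both facts are transparent from the conjugation identity above together with $B, M\in M_n(\Z)$. Thus the corollary is just the $r=1$ specialization of Theorem \ref{th(1.2)}.
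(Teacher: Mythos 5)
Your proposal is correct and follows essentially the same route as the paper: the authors likewise identify $M_1=\ell$ with the eigenvalue of $v$ via the conjugation identity from Lemma \ref{th(3.6)} (computing $Mv=B^{-1}\widetilde{M}Bv=\ell v$) and then invoke the case $f(x)=x^r+c$ with $r=1$ of Theorem \ref{th(1.2)} (Theorem \ref{th(3.7)}). Your added care in checking that $\ell=\det(M_1)$ is an integer and agrees with the abstract $1\times 1$ block is a harmless elaboration of the same argument.
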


For the organization of the paper, we recall several basic concepts and lemmas in Section 2 and prove the main results orderly in Section 3.

\section{Preliminaries}
In this section, we give some  preliminary definitions and  lemmas that we need in proving our main results.

\begin{defi}
Let $M\in M_n(\mathbb{Z})$ be an expanding matrix. Let $\mathcal {D}$ and $S$ be finite subsets of $\mathbb{Z}^n$ with the same cardinality $q$. We say $(M^{-1}\mathcal {D}, S)$ is an integral compatible pair if the  matrix
\begin{equation*}
H=\frac{1}{\sqrt{q}}\left[e^{2\pi i <M^{-1}d,s>}\right]_{d\in \mathcal {D},s\in S}
\end{equation*}
is unitary, i.e., $H^*H=I$, where $H^{*}$ denotes the transposed conjugate of $H$. At this time, $(M,\mathcal {D},S)$ is called Hadamard triple.
\end{defi}

It is fairly easy to construct an infinite mutually orthogonal set of exponential functions using the Hadamard triple assumption. However, to check these exponentials form a Fourier basis for $L^2(\mu)$ is a much more difficult task. Jorgensen and Pedersen conjectured that all Hadamard triples will generate self-affine spectral  measures. The conjecture was solved in dimension one by Laba and Wang \cite{Laba-Wang_2002}. Under various additional conditions, it was also valid in the high dimensions (see \cite{Dutkay-Jorgensen2007}, \cite{Strichartz_1998}, \cite{Strichartz_2000}). Until recently, Dutkay, Haussermann and Lai \cite{Dutkay-Haussermann-Lai_2015} have completely proved the conjecture.

\begin{lem} [\cite{Dutkay-Haussermann-Lai_2015}] \label{th(2.1)}
Let $(M,\mathcal {D},S)$ be a Hadamard triple. Then the self-affine measure $\mu_{M,\mathcal {D}}$ is a spectral measure.
\end{lem}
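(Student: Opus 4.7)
The plan is to follow the well-known strategy, pioneered by Jorgensen--Pedersen and Strichartz, refined by \L{}aba--Wang in dimension one, and brought to full generality by Dutkay--Haussermann--Lai: first construct an explicit candidate spectrum $\Lambda$ out of the dual digit set $S$, then verify mutual orthogonality from the Hadamard condition, and finally establish completeness via the Jorgensen--Pedersen criterion on the square-sum of Fourier transforms.

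For the candidate spectrum I would take
\[
\Lambda = \Big\{ s_0 + M^{t} s_1 + (M^{t})^{2} s_2 + \cdots + (M^{t})^{k} s_k : k \geq 0,\ s_i \in S \Big\},
\]
possibly after shifting/symmetrizing so that the generating IFS $\tau_s(\xi) = (M^{t})^{-1}(\xi+s)$ has a well-defined invariant attractor. Recall the Fourier transform
\[
\widehat{\mu}_{M,\mathcal{D}}(\xi) = \prod_{k=1}^{\infty} m_{\mathcal{D}}\big((M^{t})^{-k}\xi\big), \qquad m_{\mathcal{D}}(\xi) = \frac{1}{|\mathcal{D}|}\sum_{d\in \mathcal{D}} e^{2\pi i \langle d,\xi\rangle}.
\]
The Hadamard triple hypothesis says exactly that the columns of the matrix $(e^{2\pi i\langle M^{-1}d,s\rangle})_{d,s}$ are orthonormal, hence $m_{\mathcal{D}}((M^{t})^{-1}(s-s')) = 0$ for distinct $s,s'\in S$. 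Iterating, any two distinct $\lambda,\lambda'\in \Lambda$ differ first at some level $k$, so one factor of $\widehat{\mu}(\lambda-\lambda')$ already vanishes, giving $\int e^{2\pi i\langle\lambda-\lambda',x\rangle}\,d\mu = 0$.

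For completeness I would invoke the Jorgensen--Pedersen test: $\Lambda$ is a spectrum iff
\[
Q_{\Lambda}(\xi) := \sum_{\lambda\in\Lambda}\bigl|\widehat{\mu}(\xi+\lambda)\bigr|^{2} \equiv 1.
\]
A routine Bessel argument shows $Q_{\Lambda}\le 1$, $Q_{\Lambda}$ is continuous (even real-analytic after truncation), and, using the IFS structure of $\Lambda$ together with the fact that $\{|m_{\mathcal{D}}(\tau_s \xi)|^{2}\}_{s\in S}$ sums to $1$, one obtains the invariance identity $Q_{\Lambda}(\xi) = \sum_{s\in S} |m_{\mathcal{D}}(\tau_{s}\xi)|^{2}\,Q_{\Lambda}(\tau_{s}\xi)$. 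Thus failure of completeness forces the existence of a nontrivial ``extreme cycle'' of the dual IFS on which $Q_{\Lambda}<1$.

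The main obstacle is ruling out such extreme cycles in full generality, without any auxiliary reducibility or no-overlap hypothesis. Here I would adopt the random ergodic device of Dutkay--Haussermann--Lai: choose $s_{1},s_{2},\ldots$ i.i.d.\ uniformly from $S$, form the random orbit $\xi_{k} = \tau_{s_{k}}\cdots\tau_{s_{1}}\xi$, and show via a martingale argument that $Q_{\Lambda}(\xi_{k})\to 1$ almost surely, which together with continuity and invariance forces $Q_{\Lambda}\equiv 1$ on $\mathbb{R}^{n}$. This probabilistic/ergodic step is the crux of the argument, replacing the ad hoc case analyses that had blocked earlier higher-dimensional attempts and yielding the spectrality of $\mu_{M,\mathcal{D}}$ under nothing beyond the Hadamard triple assumption.
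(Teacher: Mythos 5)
First, note that the paper does not prove this lemma at all: it is imported wholesale from Dutkay--Haussermann--Lai \cite{Dutkay-Haussermann-Lai_2015}, so there is no in-paper argument to measure yours against. Judged on its own terms, your proposal is an accurate outline of the easy half of the known strategy but has a genuine gap exactly where the theorem is hard. The orthogonality step is fine: the Hadamard condition gives $m_{\mathcal D}((M^{t})^{-1}(s-s'))=0$ for distinct $s,s'\in S$, and the $\mathbb{Z}^n$-periodicity of $m_{\mathcal D}$ makes the ``first level of disagreement'' argument work. Likewise the identity $\sum_{s\in S}|m_{\mathcal D}(\tau_s\xi)|^2\equiv 1$ and the resulting transfer-operator invariance of $Q_\Lambda$ are routine consequences of unitarity.

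The gap is the completeness step. You reduce it to ``ruling out extreme cycles'' and then propose a random-orbit/martingale device showing $Q_\Lambda(\xi_k)\to 1$ almost surely. But the martingale structure only yields $Q_\Lambda(\xi)=\mathbb{E}\bigl[\lim_k Q_\Lambda(\xi_k)\bigr]$; to conclude you must already know that $Q_\Lambda=1$ on the attractor of the dual IFS $\{\tau_s\}_{s\in S}$, where the random orbits accumulate --- and that is a localized restatement of the very claim being proved, not a consequence of Bessel plus continuity. In dimension one the attractor analysis reduces to finitely many integer cycles (\L{}aba--Wang); in $\mathbb{R}^n$ the zero set of $m_{\mathcal D}$ can contain whole hypersurfaces meeting the attractor, the no-overlap property of $\mu_{M,\mathcal D}$ can fail, and the canonical set $\Lambda=\bigl\{\sum_{k}(M^{t})^{k}s_k\bigr\}$ need not itself be a spectrum. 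The actual proof in \cite{Dutkay-Haussermann-Lai_2015} handles this by induction on the dimension: either the triple satisfies an irreducibility/no-overlap condition, in which case a Strichartz--\L{}aba--Wang type argument closes the completeness step, or the dynamics force a rational invariant subspace and the measure decomposes into a quasi-product form to which the inductive hypothesis applies, with the spectrum assembled from modified digit sets level by level. None of that reduction appears in your sketch, so as written the proposal establishes an infinite orthogonal family but not spectrality.
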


The Fourier transform of  $\mu_{M,\mathcal {D}}$ plays a key role in studying the sepctrality of the measure. We recall the definition by
\begin{equation}\label {eq(2.2)}
\hat{\mu}_{M,\mathcal {D}}(\xi):=\int e^{2\pi i<x,\xi>}d\mu_{M,\mathcal {D}}(x)=\prod_{j=1}^\infty m_\mathcal {D}({M^{*}}^{-j}\xi), \quad \xi\in\mathbb{R}^n
\end{equation}
where $m_\mathcal {D}(\cdot)=\frac{1}{|\mathcal {D}|}\sum_{d\in \mathcal {D}}{e^{2\pi i\langle d,\cdot\rangle}}$ is the mask polynomial of the digit set $\mathcal D$. It is easy to see that $m_\mathcal {D}$ is a $\mathbb{Z}^n$-periodic function for $\mathcal {D}\subset \mathbb{Z}^n$. Let $Z_\mathcal {D}^n:=\{x\in [0, 1)^n : m_\mathcal {D}(x)=0\}$ be the zero set of $m_\mathcal {D}$ in $[0,1)^n$. Then we have the following useful criterion for the existence of infinitely many orthogonal systems.

\begin{lem} [\cite{Li_2010}] \label{th(2.3)}
Let $M\in M_n(\mathbb{Z})$ be an expanding matrix and $\mathcal {D}\subset \mathbb{Z}^n $ be a finite digit set, if there exist $\alpha\in Z_\mathcal {D}^n$ and $\ell\in \mathbb{N}$ such that $M^{*\ell}\alpha \in \mathbb{Z}^n$. Then there are infinitely many orthogonal exponentials $E(\Lambda)$ in $L^2(\mu_{M,\mathcal {D}})$ with $\Lambda\subseteq \mathbb{Z}^n$.
\end{lem}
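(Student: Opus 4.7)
The plan is to exhibit an explicit infinite set $\Lambda\subset\mathbb{Z}^n$ whose pairwise differences annihilate $\hat\mu_{M,\mathcal{D}}$. Recall that two exponentials $e^{2\pi i\langle\lambda,\cdot\rangle}$ and $e^{2\pi i\langle\lambda',\cdot\rangle}$ are orthogonal in $L^2(\mu_{M,\mathcal{D}})$ exactly when $\hat\mu_{M,\mathcal{D}}(\lambda-\lambda')=0$, and by the infinite product \eqref{eq(2.2)} this is guaranteed as soon as a single factor $m_{\mathcal{D}}(M^{*-s}(\lambda-\lambda'))$ vanishes for some positive integer $s$. Since $m_{\mathcal{D}}$ is $\mathbb{Z}^n$-periodic, it suffices to engineer $M^{*-s}(\lambda-\lambda')$ to be congruent, modulo $\mathbb{Z}^n$, to a zero of $m_{\mathcal{D}}$; the hypothesized $\alpha$ is the natural candidate.

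First I would set $\beta:=M^{*\ell}\alpha$, which lies in $\mathbb{Z}^n\setminus\{0\}$ (nonzero because $m_{\mathcal{D}}(0)=1$ forces $\alpha\neq 0$, and $M^*$ is invertible), and define
\[
\Lambda:=\{\lambda_k:=M^{*k\ell}\beta:\, k=0,1,2,\dots\}\subset\mathbb{Z}^n.
\]
Since $M^*$ is expanding, no eigenvalue of any power $M^{*(i-j)\ell}$ equals $1$, so $\lambda_i\neq\lambda_j$ whenever $i\neq j$ and $\Lambda$ is infinite.

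The heart of the argument is a direct computation: for $i>j\geq 0$, taking $s=(j+1)\ell$ in the infinite product gives
\[
M^{*-(j+1)\ell}(\lambda_i-\lambda_j)=M^{*(i-j-1)\ell}\beta-M^{*-\ell}\beta\equiv -\alpha \pmod{\mathbb{Z}^n},
\]
because $(i-j-1)\ell\geq 0$ makes the first term an integer vector and $M^{*-\ell}\beta=\alpha$ by the very definition of $\beta$. Since $\mathcal{D}\subset\mathbb{Z}^n$ is real, $m_{\mathcal{D}}(-\alpha)=\overline{m_{\mathcal{D}}(\alpha)}=0$; combined with $\mathbb{Z}^n$-periodicity this forces the factor at index $(j+1)\ell$ in $\hat\mu_{M,\mathcal{D}}(\lambda_i-\lambda_j)$ to vanish, so $\hat\mu_{M,\mathcal{D}}(\lambda_i-\lambda_j)=0$ and $E(\Lambda)$ is an infinite mutually orthogonal system.

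The main obstacle, I expect, is producing \emph{pairwise} orthogonality from the single relation $M^{*\ell}\alpha\in\mathbb{Z}^n$, rather than merely orthogonality to a fixed basepoint. This dictates the geometric spacing $\lambda_k=M^{*k\ell}\beta$: with this choice the one hypothesis can be applied uniformly to every pair $(i,j)$ by simply shifting the product index to $(j+1)\ell$, whereas a linear or otherwise non-geometric progression would fail to reduce a generic difference $\lambda_i-\lambda_j$ to $-\alpha$ modulo the lattice with a single choice of $s$.
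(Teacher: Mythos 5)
Your argument is correct and complete: the geometric sequence $\lambda_k=M^{*k\ell}\beta$ with $\beta=M^{*\ell}\alpha$ reduces every pairwise difference to $-\alpha$ modulo $\mathbb{Z}^n$ after applying $M^{*-(j+1)\ell}$, and the symmetry $m_{\mathcal D}(-\alpha)=\overline{m_{\mathcal D}(\alpha)}=0$ together with the infinite-product formula \eqref{eq(2.2)} kills the corresponding factor. The paper itself states this lemma without proof, citing \cite{Li_2010}; your construction is essentially the standard argument from that reference, including the correct checks that $\beta\neq 0$ and that expansiveness of $M^*$ makes the $\lambda_k$ distinct.
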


Let $M$, $\widetilde{M}$ be $n\times n$ integer matrices, and the finite sets $\mathcal {D}, S, \widetilde{\mathcal {D}},\widetilde{S} $ be in $\mathbb{Z}^n$.
We say that two triples $(M,\mathcal {D},S)$ and $(\widetilde{M},\widetilde{\mathcal {D}},\widetilde{S})$ are \emph{conjugate} (through the matrix $B$) if there
exists an integer matrix $B$ such that $\widetilde{M}=B^{-1}MB$, $\widetilde{\mathcal {D}}= B^{-1}\mathcal {D}$ and $\widetilde{S}=B^*S$. The following lemma is trivial.

\begin{lem}\label{th(2.2)}
Suppose that $(M,\mathcal {D},S)$ and $(\widetilde{M},\widetilde{\mathcal {D}},\widetilde{S})$ are two conjugate triples, through the
matrix B. Then

(i) if $(M,\mathcal {D},S)$ is a Hadamard triple then so is $(\widetilde{M},\widetilde{\mathcal {D}},\widetilde{S})$;

(ii) $\mu_{M,\mathcal {D}}$ is a spectral measure with spectrum $\Lambda$ if and only if $\mu_{\widetilde{M},\widetilde{\mathcal {D}}}$ is a spectral measure with spectrum $B^*\Lambda$;

(iii)  $\mu_{M,\mathcal {D}}$ has infinitely many orthogonal exponentials $E(\Lambda)$ in $L^2(\mu_{M,\mathcal {D}})$ if and only if $\mu_{\widetilde{M},\widetilde{\mathcal {D}}}$ has infinitely many orthogonal exponentials $E(B^*\Lambda)$ in $L^2(\mu_{\widetilde{M},\widetilde{\mathcal {D}}})$.
\end{lem}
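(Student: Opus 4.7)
The plan is to verify all three claims by direct substitution, exploiting the identity
\begin{equation*}
\langle \widetilde{M}^{-1}\tilde d,\,\tilde s\rangle=\langle M^{-1}d,\,s\rangle
\end{equation*}
whenever $\tilde d=B^{-1}d$ and $\tilde s=B^{*}s$. Since $\widetilde{M}^{-1}=B^{-1}M^{-1}B$ and $(B^{-1})^{*}=(B^{*})^{-1}$, this reduces to a three-line computation. Consequently the Hadamard matrix built from $(\widetilde M,\widetilde{\mathcal D},\widetilde S)$ has exactly the same entries as the one built from $(M,\mathcal D,S)$, so (i) is immediate.

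For (ii) and (iii), first derive the companion formula on the Fourier side. The same identity gives $m_{\widetilde{\mathcal D}}(x)=m_{\mathcal D}((B^{*})^{-1}x)$, and $\widetilde{M}^{*-j}=B^{*}M^{*-j}(B^{*})^{-1}$, so the infinite product \eqref{eq(2.2)} telescopes to
\begin{equation*}
\hat\mu_{\widetilde M,\widetilde{\mathcal D}}(\xi)=\hat\mu_{M,\mathcal D}\bigl((B^{*})^{-1}\xi\bigr).
\end{equation*}
Equivalently, $\mu_{\widetilde M,\widetilde{\mathcal D}}$ is the pushforward of $\mu_{M,\mathcal D}$ under $x\mapsto B^{-1}x$, as one can also verify from the set-valued IFS equation, which yields $\widetilde T=B^{-1}T$.

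To conclude, recall that $E(\Lambda)$ is orthogonal in $L^{2}(\mu)$ iff $\hat\mu(\lambda-\lambda')=0$ for all distinct $\lambda,\lambda'\in\Lambda$, and is a spectrum iff moreover the Parseval identity $\sum_{\lambda\in\Lambda}|\hat\mu(\xi+\lambda)|^{2}=1$ holds for every $\xi$. Since $B^{*}\lambda-B^{*}\lambda'=B^{*}(\lambda-\lambda')$, the Fourier identity above converts orthogonality of $E(\Lambda)$ on one side into orthogonality of $E(B^{*}\Lambda)$ on the other, proving (iii); and the change of variable $\eta=B^{*}\xi$ transfers the Parseval identity, proving (ii). No serious obstacle is expected — as the authors note, the lemma is routine — but one must carefully track $B$ against $(B^{*})^{-1}$ so that the spectrum on the frequency side is indeed $B^{*}\Lambda$, which is dual to the pushforward action by $B^{-1}$ on points.
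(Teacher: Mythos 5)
Your proposal is correct, and its computational core is identical to the paper's: the single identity $\langle B^{-1}d,\,B^{*}s\rangle=\langle d,\,s\rangle$, which the paper uses in the form $m_{\widetilde{\mathcal D}}(\widetilde{M}^{*-j}B^{*}\eta)=m_{\mathcal D}(M^{*-j}\eta)$ to prove (iii) via $\mathcal{Z}(\hat\mu_{\widetilde M,\widetilde{\mathcal D}})=B^{*}\mathcal{Z}(\hat\mu_{M,\mathcal D})$, exactly as you do. The only divergence is that the paper simply cites \cite{Dutkay-Jorgensen2007} and \cite{Dutkay-Haussermann-Lai_2015} for (i) and (ii), whereas you supply self-contained arguments: for (i) the entrywise equality of the two Hadamard matrices (immediate from the same identity), and for (ii) the Jorgensen--Pedersen completeness criterion $\sum_{\lambda\in\Lambda}|\hat\mu(\xi+\lambda)|^{2}\equiv 1$ transported by the substitution $\eta=B^{*}\xi$ through $\hat\mu_{\widetilde M,\widetilde{\mathcal D}}(B^{*}\xi)=\hat\mu_{M,\mathcal D}(\xi)$. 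That buys you independence from the cited references at the cost of invoking one standard external fact (the Parseval-type characterization of spectra); the pushforward observation $\mu_{\widetilde M,\widetilde{\mathcal D}}=\mu_{M,\mathcal D}\circ B$ gives an equally valid alternative route to (ii) via the induced unitary between the two $L^{2}$ spaces. All the adjoint bookkeeping ($\widetilde M^{*-j}=B^{*}M^{*-j}(B^{*})^{-1}$, $m_{\widetilde{\mathcal D}}(x)=m_{\mathcal D}((B^{*})^{-1}x)$) is consistent with the paper's convention $\widetilde M=B^{-1}MB$, $\widetilde{\mathcal D}=B^{-1}\mathcal D$, $\widetilde S=B^{*}S$, so there is no gap.
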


\begin{proof}
The proofs of (i) and (ii) can be found in \cite{Dutkay-Jorgensen2007} (or  \cite{Dutkay-Haussermann-Lai_2015}). In fact, it is easy to see that $E(\Lambda)$ is an orthogonal set in $L^2(\mu_{M,\mathcal {D}})$ if and only if $(\Lambda-\Lambda)\setminus \{0\}\subset {\mathcal Z}(\hat{\mu}_{M,\mathcal {D}}):=\{\xi\in{\mathbb R}^n: \hat{\mu}_{M,\mathcal {D}}(\xi)=0\}$. Let $\lambda_1, \lambda_2\in \Lambda$, then
\begin{eqnarray*}
m_{\widetilde{\mathcal D}}(\widetilde{M}^{*-j}B^*(\lambda_1-\lambda_2))
&=& m_{\widetilde{\mathcal D}}(B^*{M}^{*-j}(\lambda_1-\lambda_2)) \\
&=& \frac{1}{|\mathcal {D}|}\sum_{d\in \mathcal {D}}{e^{2\pi i\langle B^{-1}d,B^*{M}^{*-j}(\lambda_1-\lambda_2)\rangle}}
= m_{\mathcal D}({M}^{*-j}(\lambda_1-\lambda_2)).
\end{eqnarray*}
Hence by $\eqref{eq(2.2)}$, $\lambda_1-\lambda_2\in {\mathcal Z}(\hat{\mu}_{M,\mathcal {D}})$ if and only if $B^*\lambda_1-B^*\lambda_2\in {\mathcal Z}(\hat{\mu}_{\widetilde{M},\widetilde{\mathcal D}})$, proving (iii).
\end{proof}

\section{Main results}

In this section, we first consider the case that $\left\{v,Mv,\dots,M^{n-1}v\right\}$ are linearly independent for expanding matrix $M\in M_n(\mathbb{Z})$
and $v\in \mathbb{Z}^n\setminus \{0\}$. We will get the sufficient conditions for $\mu_{M,{\mathcal {D}}}$ to have infinitely many orthogonal exponentials or to be a spectral measure that only depend on the value of  $\det(M)$. The following simple lemma is an important tool in constructing a suitable conjugate Hadamard triple.

\begin{lem}\label{th(3.1)}
Let $M\in M_n(\mathbb{Z})$ be an integer matrix  with characteristic polynomial $f(x)=x^n+a_1x^{n-1}+\cdots+a_{n-1}x+a_n$ and $v=(x_1,\dots,x_n)^t\in \mathbb{Z}^n\setminus \{0\}$. If the set of vectors $\left\{v,Mv,\dots,M^{n-1}v\right\}$ is linearly independent, then there exists an integer matrix $B$, such that
\begin{align} \label{eq(3.1)}
        \widetilde{M}:=\begin{bmatrix}
-a_1&1&0&\cdots &0\\
-a_2&0&1&\cdots &0\\
\vdots & \vdots & \vdots & \ddots & \vdots \\
-a_{n-1}&0&0&\cdots &1\\
-a_n&0&0&\cdots &0\\
  \end{bmatrix}=B^{-1}MB
\end{align}
and $B^{-1}v=(0,\dots,0,1)^t$.
\end{lem}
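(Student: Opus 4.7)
The plan is to write down $B$ explicitly in terms of the iterates of $v$ under $M$, so that the conjugation relation becomes a direct verification using the Cayley--Hamilton theorem.

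Concretely, I would \emph{define}
\[
B \;=\; \bigl[\,M^{n-1}v \;\big|\; M^{n-2}v \;\big|\; \cdots \;\big|\; Mv \;\big|\; v\,\bigr],
\]
i.e.\ the $n\times n$ matrix whose $k$-th column is $M^{n-k}v$. Since $M\in M_n(\mathbb{Z})$ and $v\in\mathbb{Z}^n$, $B$ has integer entries; and the hypothesis that $\{v,Mv,\dots,M^{n-1}v\}$ is linearly independent says exactly that the columns of $B$ form a basis of $\mathbb{R}^n$, so $B$ is invertible. Because $v$ is literally the last column of $B$, the identity $Be_n=v$ gives $B^{-1}v=(0,\dots,0,1)^t$ for free, which handles one of the two claims.

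For the conjugation, I would check $MB=B\widetilde{M}$ column by column. For $k=2,\dots,n$, the $k$-th column of $MB$ is $M\cdot(M^{n-k}v)=M^{n-(k-1)}v$, which is column $k-1$ of $B$; and from the explicit form of $\widetilde{M}$ displayed in \eqref{eq(3.1)} one has $\widetilde{M}e_k=e_{k-1}$, so $B\widetilde{M}e_k=Be_{k-1}$ matches. The only substantive step is the first column: $MBe_1=M^nv$, while $\widetilde{M}e_1=(-a_1,-a_2,\dots,-a_n)^t$, so
\[
B\widetilde{M}e_1 \;=\; -\sum_{k=1}^{n} a_k\,M^{n-k}v.
\]
The equality $MBe_1=B\widetilde{M}e_1$ is therefore precisely the Cayley--Hamilton relation $M^n=-\sum_{k=1}^n a_k M^{n-k}$ applied to the vector $v$.

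There is no real obstacle here; the only point worth underscoring is that the displayed companion-type matrix $\widetilde M$ is uniquely forced once one decides to take $v$ as the last column of $B$ and the cyclic iterates $Mv,\dots,M^{n-1}v$ as the remaining columns in reverse order, with Cayley--Hamilton supplying the first column. Note that the lemma claims only that $B$ is integral (not that $B^{-1}$ is), which is consistent with the construction since $\det B$ need not be $\pm 1$; this is all that is needed for the subsequent applications in Section~3, where $\widetilde{M}$ is used as an integer matrix in its own right.
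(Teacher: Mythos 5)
Your proposal is correct and coincides with the paper's own proof: both take $B=[M^{n-1}v,\dots,Mv,v]$, read off $B^{-1}v=(0,\dots,0,1)^t$ from the last column, and verify $MB=B\widetilde{M}$ column by column with the Cayley--Hamilton theorem supplying the first column. Your closing remark that only integrality of $B$ (not of $B^{-1}$) is claimed is also consistent with how the lemma is used later.
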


\begin{proof}
Let $B=\left[M^{n-1}v, M^{n-2}v, \dots,  Mv,v\right]$, then $B$ is  invertible and  $B(0,\dots,0,1)^t=v$. Hence  $B^{-1}v=(0,\dots,0,1)^t$. To prove \eqref{eq(3.1)}, we only need to show that $B\widetilde{M}=MB=\left[M^{n}v, M^{n-1}v, \dots,  M^2v,Mv\right]$. Indeed,
\begin{eqnarray} \label{eq(3.2)}  \nonumber
        B\widetilde{M}&=&\left[M^{n-1}v, M^{n-2}v, \dots ,  Mv,v \right]\begin{bmatrix}
-a_1&1&0&\cdots &0\\
-a_2&0&1&\cdots &0\\
\vdots & \vdots & \vdots & \ddots & \vdots \\
-a_{n-1}&0&0&\cdots &1\\
-a_n&0&0&\cdots &0\\
  \end{bmatrix}\\ \nonumber
&=&\left[\sum_{i=1}^n-a_i M^{n-i}v, M^{n-1}v, \dots,  M^2v, Mv \right]\\
&=&\left[(-\sum_{i=1}^na_i M^{n-i})v, M^{n-1}v, \dots,  M^2v, Mv \right].
\end{eqnarray}
Since $f(x)$ is the characteristic polynomial of $M$, it follows that $f(M)=M^n+a_1M^{n-1}+\cdots+a_{n-1}M+a_nI=0$ and $M^n=-\sum_{i=1}^na_i M^{n-i}$. By \eqref{eq(3.2)}, we have $B\widetilde{M}=\left[M^nv, M^{n-1}v, \dots,  M^2v,Mv \right]=MB$.
\end{proof}

\begin{thm} \label{th(3.2)}
Let $\mathcal {D}=\{0,1,\dots, q-1\}v$ with $q\ge 2, v\in \mathbb{Z}^n\setminus \{0\}$, and let $M\in M_n(\mathbb{Z})$ be an expanding matrix.
If $\{v, Mv,\dots, M^{n-1}v\}$ is linearly independent, then

(i) $\mu_{M,\mathcal {D}}$ has infinitely many orthogonal exponentials if $\gcd(q,\det(M))> 1$;

(ii) $\mu_{M,\mathcal {D}}$ is a spectral measure if $q|\det(M)$.
\end{thm}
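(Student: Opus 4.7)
The plan is to use Lemma \ref{th(3.1)} to conjugate away the hypothesis that $\{v,Mv,\dots,M^{n-1}v\}$ is linearly independent, reducing the problem to the case where $M$ is a companion matrix and $v=e_n$, and then to handle the two parts using the structural results of Section~2.

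First I would apply Lemma \ref{th(3.1)} with $B=[M^{n-1}v,\dots,Mv,v]\in M_n(\Z)$ to obtain a conjugate pair $(\wt{M},\wt{\mathcal{D}})$ in which $\wt{M}$ is the companion matrix of the characteristic polynomial $f(x)=x^n+a_1x^{n-1}+\cdots+a_n$ of $M$ and $\wt{\mathcal{D}}=B^{-1}\mathcal{D}=\{0,1,\dots,q-1\}e_n$. Lemma \ref{th(2.2)}(ii)--(iii) tells us that both conclusions are invariant under this conjugation, so it suffices to treat $\mu_{\wt{M},\wt{\mathcal{D}}}$. Expanding $\det(\wt{M})$ along the last row yields $|\det(M)|=|a_n|$, so the two divisibility hypotheses translate cleanly into $\gcd(q,a_n)>1$ and $q\mid a_n$.

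For part (ii), assume $a_n=qb$ with $b\in\Z\setminus\{0\}$ (nonzero because $M$ is expanding). Solving $\wt{M}x=e_n$ from the explicit companion form gives $\wt{M}^{-1}e_n=-\tfrac{1}{a_n}(1,a_1,\dots,a_{n-1})^t$, so the choice $S=\{-jb\,e_1:j=0,\dots,q-1\}\subset\Z^n$ produces $\langle\wt{M}^{-1}(ke_n),-jb\,e_1\rangle=kj/q$. The associated matrix $\tfrac{1}{\sqrt q}[e^{2\pi ikj/q}]_{k,j=0}^{q-1}$ is the standard $q\times q$ DFT and is unitary, so $(\wt{M},\wt{\mathcal{D}},S)$ is a Hadamard triple and Lemma \ref{th(2.1)} gives the spectrality of $\mu_{\wt{M},\wt{\mathcal{D}}}$.

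For part (i), set $p=\gcd(q,a_n)>1$ and take $\alpha=\tfrac{1}{p}e_n$. Since $p\mid q$ and $2\le p\le q$, the integer $q/p$ lies in $\{1,\dots,q-1\}$, so $\alpha\in[0,1)^n$ and $m_{\wt{\mathcal{D}}}(\alpha)=\tfrac{1}{q}\sum_{k=0}^{q-1}e^{2\pi ik/p}=0$, i.e., $\alpha\in Z^n_{\wt{\mathcal{D}}}$. A direct computation with the transpose companion matrix gives $\wt{M}^*\alpha=(-a_n/p,0,\dots,0)^t\in\Z^n$ (using $p\mid a_n$), and Lemma \ref{th(2.3)} then supplies infinitely many orthogonal exponentials in $L^2(\mu_{\wt{M},\wt{\mathcal{D}}})$. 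I do not anticipate any deep obstacle in this plan: the substantive content is entirely in the right choice of the Hadamard companion set $S$ in (ii) and of the vanishing point $\alpha$ in (i), both of which are pinpointed by the divisibility hypothesis, while the heavy lifting of turning a Hadamard triple into a spectrum and of passing from a zero of the mask polynomial to an infinite orthogonal family is already provided by Lemmas \ref{th(2.1)} and \ref{th(2.3)}.
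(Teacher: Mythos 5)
Your proposal is correct and follows essentially the same route as the paper: conjugate by $B=[M^{n-1}v,\dots,Mv,v]$ to the companion-matrix form via Lemma \ref{th(3.1)}, then take $\alpha=\tfrac{1}{s}e_n$ with $s=\gcd(q,a_n)$ and invoke Lemma \ref{th(2.3)} for part (i), and take the companion set $S=\{0,1,\dots,q-1\}(-a_n/q)e_1$ (your $-jb\,e_1$) to get the $q\times q$ DFT Hadamard triple and invoke Lemma \ref{th(2.1)} for part (ii). The computations of $\wt{M}^*\alpha$ and $\wt{M}^{-1}e_n$ match the paper's, so no gaps.
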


\begin{proof}
Suppose $f(x)=x^n+a_1x^{n-1}+\cdots+a_{n-1}x+a_n$ is the characteristic polynomial of $M$. Let $\widetilde{M}$ be as  in \eqref{eq(3.1)} and
$\widetilde{v}=(0,\dots,0,1)^t$.   By Lemmas \ref{th(2.2)} and \ref{th(3.1)}, it suffices to prove the theorem for the measure
$\mu_{\widetilde{M},\widetilde{\mathcal {D}}}$ which is generated by $\widetilde{M}$ and $\widetilde{\mathcal {D}}=\{0,1,\dots, q-1\}\widetilde{v}$.

(i) For $\xi=(\xi_1,\dots, \xi_n)^t\in {\mathbb R}^n$, note that
\begin{eqnarray*}
m_{\widetilde{\mathcal {D}}}(\xi)&=&\frac{1}{q}\sum\limits_{\widetilde{d}\in \widetilde{\mathcal {D}}}{e^{2\pi i\langle \widetilde{d},\xi\rangle}}
=\frac{1}{q}(1+e^{2\pi i\xi_n}+e^{2\pi i2\xi_n}+\cdots+e^{2\pi i(q-1)\xi_n}).
\end{eqnarray*}
Then
\begin{eqnarray*}
Z_{\widetilde{\mathcal {D}}}^n&:=&\{\xi\in [0, 1)^n : m_{\widetilde{\mathcal {D}}}(\xi)=0\}\\
&=&\{(\xi_1,\dots,\xi_{n-1},j/q)^t: \xi_1,\dots,\xi_{n-1}\in [0,1), j=1,\dots, q-1\}.
\end{eqnarray*}
Suppose $\gcd(q,\det(M))=s>1$, let  $\alpha:=(0,\dots,0,1/s)^t\in{\mathbb R}^n$, then $\alpha\in Z_{\widetilde{\mathcal {D}}}^n$ and
\begin{align*}
{\widetilde{M}}^*\alpha=\begin{bmatrix}
-a_1&-a_2&\cdots &-a_{n-1}&-a_n\\
1 &0&\cdots &0 &0\\
\vdots & \vdots &  \ddots & \vdots  & \vdots \\
0&0&\cdots &0 &0\\
0&0&\cdots &1 &0\\
  \end{bmatrix}\begin{bmatrix}
0\\
0\\
\vdots\\
0\\
\frac{1}{s}\\
\end{bmatrix}=\begin{bmatrix}
\frac{-a_n}{s}\\
0\\
\vdots\\
0\\
0\\
\end{bmatrix}.
\end{align*}

As $a_n=(-1)^n\det(M)=(-1)^n\det(\widetilde{M})$, then $\frac{-a_n}{s}\in \mathbb{Z}$ and ${\widetilde{M}}^*\alpha\in \mathbb{Z}^n$.
Lemma \ref{th(2.3)} implies that $\mu_{\widetilde{M},\widetilde{\mathcal {D}}}$ has infinitely many orthogonal exponentials.

(ii) If $q|\det(M)$, let $u=(\frac{-a_n}{q},0,\dots,0)^t$, $\widetilde{S}=\{0,1,\dots,q-1\}u$, then
$|\widetilde{D}|=|\widetilde{S}|=q$ and $\widetilde{S}\subset\mathbb{Z}^n$. Moreover,
\begin{align*}
{\widetilde{M}}^{-1}\widetilde{v}=\begin{bmatrix}
0&0&\cdots &0&\frac{-1}{a_n}\\
1 &0&\cdots &0 &\frac{-a_{1}}{a_n}\\
\vdots & \vdots &  \ddots & \vdots  & \vdots \\
0&0&\cdots &0 &\frac{-a_{n-2}}{a_n}\\
0&0&\cdots &1 &\frac{-a_{n-1}}{a_n}\\
  \end{bmatrix}\begin{bmatrix}
0\\
0\\
\vdots\\
0\\
1\\
\end{bmatrix}=\begin{bmatrix}
\frac{-1}{a_n}\\
\frac{-a_{1}}{a_n}\\
\vdots\\
\frac{-a_{n-2}}{a_n}\\
\frac{-a_{n-1}}{a_n}\\
\end{bmatrix}.
\end{align*} It yields that
\begin{align*}
H=\frac{1}{\sqrt{q}}\left[e^{2\pi i \langle \widetilde{M}^{-1}\widetilde{d},\widetilde{s}\rangle}\right]_{\widetilde{d}\in \widetilde{\mathcal {D}}, \widetilde{s}\in\widetilde{ S}}
=\frac{1}{\sqrt{q}}\left[e^{2\pi i \frac{k\ell}{q}}\right]_{k,\ell\in \{0,1,\dots,q-1\} }
\end{align*}
is unitary. Hence $(\widetilde{M},\widetilde{\mathcal {D}},\widetilde{S})$ is a Hadamard triple, and $\mu_{\widetilde{M},\widetilde{\mathcal {D}}}$ is a spectral measure by Lemma \ref{th(2.1)}.
\end{proof}

In particular, if the characteristic polynomial of $M$ is of the simple form: $f(x)=x^n+c$, we obtain a sufficient and necessary condition for  $\mu_{M,\mathcal {D}}$ to have infinitely many orthogonal exponentials or to be a spectral measure. The following lemma was due to Dai, He and Lau \cite{Dai-He-Lau_2014}.

\begin{lem}\label{th(3.3)}
Let $\mu=\mu_1\ast\mu_2$ be the convolution of two probability measures $\mu_i$, $i=1, 2$, and they are not Dirac measures. Suppose that $E(\Lambda)$ is an orthogonal set of $\mu_1$ with $0\in\Lambda$, then $E(\Lambda)$ is also an orthogonal set of $\mu$, but cannot be a spectrum of $\mu$.
\end{lem}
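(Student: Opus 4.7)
The plan is to translate both orthogonality and completeness of $E(\Lambda)$ into Fourier-analytic conditions on $\hat{\mu}$, and then exploit the factorization $\hat{\mu}=\hat{\mu}_1\hat{\mu}_2$. Since each $\mu_i$ is a probability measure, the exponentials have unit norm in $L^2(\mu_i)$, so pairwise orthogonality of $E(\Lambda)$ in $L^2(\mu_i)$ is equivalent to $(\Lambda-\Lambda)\setminus\{0\}\subseteq\mathcal{Z}(\hat{\mu}_i)$. Because $\mathcal{Z}(\hat{\mu}_1)\subseteq\mathcal{Z}(\hat{\mu}_1\hat{\mu}_2)=\mathcal{Z}(\hat{\mu})$, the orthogonality hypothesis on $\mu_1$ transfers to $\mu$ immediately, proving the first assertion.

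For the non-spectrality claim, I would argue by contradiction. Suppose $E(\Lambda)$ is a spectrum for $\mu$. Applying Parseval's identity to the unit-norm test functions $x\mapsto e^{-2\pi i\langle\xi,x\rangle}\in L^2(\mu)$ yields
\begin{equation*}
\sum_{\lambda\in\Lambda}|\hat{\mu}(\xi+\lambda)|^2 \;=\; 1\qquad\forall\,\xi\in\mathbb{R}^n.
\end{equation*}
Factor $|\hat{\mu}(\xi+\lambda)|^2=|\hat{\mu}_1(\xi+\lambda)|^2|\hat{\mu}_2(\xi+\lambda)|^2$, use $|\hat{\mu}_2|\le 1$ (since $\mu_2$ is a probability measure), and apply Bessel's inequality in $L^2(\mu_1)$ to the same test functions to obtain
\begin{equation*}
1 \;=\; \sum_{\lambda}|\hat{\mu}_1(\xi+\lambda)|^2|\hat{\mu}_2(\xi+\lambda)|^2 \;\le\; \sum_{\lambda}|\hat{\mu}_1(\xi+\lambda)|^2 \;\le\; 1.
\end{equation*}
Equality throughout forces, for every $\lambda\in\Lambda$ and every $\xi$, the pointwise dichotomy $\hat{\mu}_1(\xi+\lambda)=0$ or $|\hat{\mu}_2(\xi+\lambda)|=1$. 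As $\xi$ ranges over $\mathbb{R}^n$, the sums $\xi+\lambda$ cover $\mathbb{R}^n$, so at every $\eta\in\mathbb{R}^n$ either $\hat{\mu}_1(\eta)=0$ or $|\hat{\mu}_2(\eta)|=1$.

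The main obstacle is the final step: deducing from this dichotomy that $\mu_2$ must be a Dirac measure, contradicting the hypothesis. By continuity of $\hat{\mu}_1$ and $\hat{\mu}_1(0)=1$, there is an open neighborhood $U$ of $0$ on which $\hat{\mu}_1$ is nonzero, hence $|\hat{\mu}_2|\equiv 1$ on $U$. I would then pass to the probability measure $\nu:=\mu_2\ast\widetilde{\mu}_2$, where $\widetilde{\mu}_2(E):=\mu_2(-E)$, so that $\hat{\nu}=|\hat{\mu}_2|^2\equiv 1$ on $U$. Since $\mathrm{Re}(1-e^{2\pi i\langle x,\xi\rangle})\ge 0$ and $\int(1-e^{2\pi i\langle x,\xi\rangle})\,d\nu(x)=0$ for every $\xi\in U$, one concludes $\langle x,\xi\rangle\in\mathbb{Z}$ for $\nu$-a.e.\ $x$ and every $\xi$ in a ball around $0$; by continuity in $\xi$ this forces $x=0$. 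Thus $\nu=\delta_0$, so $\mathrm{supp}(\mu_2)-\mathrm{supp}(\mu_2)=\{0\}$, meaning $\mu_2$ is concentrated at a single point, which contradicts the assumption that $\mu_2$ is not Dirac. This completes the proof.
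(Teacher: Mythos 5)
Your proof is correct. Note, though, that the paper does not prove this lemma at all --- it is quoted from Dai--He--Lau \cite{Dai-He-Lau_2014} --- so there is no in-paper argument to compare against. Your route is the standard one: the first claim follows from $\hat{\mu}=\hat{\mu}_1\hat{\mu}_2$ and the characterization of orthogonality by $(\Lambda-\Lambda)\setminus\{0\}\subset\mathcal{Z}(\hat{\mu}_1)\subset\mathcal{Z}(\hat{\mu})$, and the second from the Jorgensen--Pedersen identity $\sum_{\lambda}|\hat{\mu}(\xi+\lambda)|^2=1$ squeezed against Bessel's inequality for $\mu_1$. Your way of finishing is actually a little more robust than the usual one: rather than locating a single point $\xi_0$ with $\hat{\mu}_1(\xi_0)\neq 0$ and $|\hat{\mu}_2(\xi_0)|<1$ (which typically invokes analyticity of $\hat{\mu}_1$ for compactly supported $\mu_1$), you extract the pointwise dichotomy everywhere, use continuity of $\hat{\mu}_1$ at $0$ to force $|\hat{\mu}_2|\equiv 1$ on a neighborhood of the origin, and then show via $\nu=\mu_2\ast\widetilde{\mu}_2$ that this makes $\mu_2$ a Dirac mass. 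The only step that deserves one more sentence is the exchange of quantifiers in ``$\langle x,\xi\rangle\in\mathbb{Z}$ for $\nu$-a.e.\ $x$ and every $\xi\in U$'': the exceptional null set a priori depends on $\xi$, so you should first restrict to a countable dense subset of $U$, take the union of the null sets, and then use continuity of $\xi\mapsto\langle x,\xi\rangle$ together with the discreteness of $\mathbb{Z}$ to pass to all $\xi\in U$ and conclude $x=0$. With that routine addition the argument is complete.
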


\begin{thm}\label{th(3.4)}
Under the same assumption of Theorem \ref{th(3.2)} and the matrix $M$ with characteristic polynomial $f(x)=x^n+c$, then

(i) $\mu_{M,\mathcal {D}}$ has infinitely many orthogonal exponentials if and only if $\gcd(q, \det(M))>1$;

(ii) $\mu_{M,\mathcal {D}}$ is a spectral measure if and only if $q|\det(M)$.
\end{thm}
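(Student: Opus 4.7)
My plan is to reduce Theorem~\ref{th(3.4)} to the one-dimensional results of \cite{Dai-He-Lai_2013, Dai-He-Lau_2014} via a product decomposition of the measure. Both ``if'' directions follow from Theorem~\ref{th(3.2)}, so I focus on the converses. By Lemmas~\ref{th(2.2)} and~\ref{th(3.1)} I may conjugate so that $M=\widetilde{M}$ is the companion matrix of $f(x)=x^n+c$ and $v=e_n$; Cayley-Hamilton then gives $\widetilde{M}^n=-cI$. A direct computation shows that for $j=kn+r$ with $0\le r<n$ one has $\widetilde{M}^{-kn}e_n=(-1/c)^k e_n$ (for $k\ge 1$) and $\widetilde{M}^{-(kn+r)}e_n=(-1/c)^{k+1} e_r$ (for $1\le r\le n-1$). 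Hence a generic point $x=\sum_{j\ge 1}\widetilde{M}^{-j}(d_j e_n)$ has coordinates $x_1,\dots,x_n$ that depend on disjoint blocks of the i.i.d.\ digits $d_j\in\{0,\dots,q-1\}$, so they are mutually independent, each distributed as the one-dimensional self-similar measure $\nu:=\mu_{-c,\{0,\dots,q-1\}}$. Therefore $\mu_{\widetilde{M},\widetilde{\mathcal{D}}}=\nu\times\cdots\times\nu$, $\hat{\mu}(\xi)=\prod_{r=1}^n \hat{\nu}(\xi_r)$, and $\mathcal{Z}(\hat{\mu})=\bigcup_{r=1}^n\{\xi\in\mathbb{R}^n:\xi_r\in\mathcal{Z}(\hat{\nu})\}$. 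Since $|\det\widetilde{M}|=|c|$, the arithmetic conditions in the theorem become $\gcd(q,c)>1$ and $q\mid c$.

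For the converse of (i), suppose $E(\Lambda)$ is an infinite orthogonal set in $L^2(\mu_{\widetilde{M},\widetilde{\mathcal{D}}})$. For each pair $\lambda\ne\lambda'$ of elements of $\Lambda$, orthogonality selects some coordinate $r\in\{1,\dots,n\}$ with $\lambda_r-\lambda'_r\in\mathcal{Z}(\hat{\nu})$. Colouring each pair by such an $r$ gives an $n$-colouring of the pairs of $\Lambda$; the infinite Ramsey theorem produces an infinite monochromatic subset $\Lambda'\subset\Lambda$ with a single colour $r^*$. Because $0\notin\mathcal{Z}(\hat{\nu})$, the projection $\pi_{r^*}(\Lambda')\subset\mathbb{R}$ consists of $|\Lambda'|$ distinct points whose nonzero pairwise differences all lie in $\mathcal{Z}(\hat{\nu})$, giving an infinite orthogonal set in $L^2(\nu)$. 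The one-dimensional theorem of Dai-He-Lai then forces $\gcd(q,c)>1$.

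For the converse of (ii), suppose $\mu_{\widetilde{M},\widetilde{\mathcal{D}}}$ is spectral; its spectrum is infinite, so (i) yields $g:=\gcd(q,c)\ge 2$. If $g=q$ we are done; otherwise $1<g<q$, and the decomposition $\{0,\dots,q-1\}=\{0,\dots,g-1\}\oplus g\{0,\dots,q/g-1\}$ gives a convolution $\mu_{\widetilde{M},\widetilde{\mathcal{D}}}=\mu_{\widetilde{M},\{0,\dots,g-1\}e_n}*\mu_{\widetilde{M},g\{0,\dots,q/g-1\}e_n}$ of two non-Dirac factors. Because $g\mid c$, Theorem~\ref{th(3.2)}(ii) applied to the first factor shows it is spectral; Lemma~\ref{th(3.3)} then says its spectrum is an orthogonal set for $\mu$ that cannot be a spectrum of $\mu$. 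The hard step is to rule out \emph{any} putative spectrum of $\mu$. I expect to combine the product form $\mu=\nu^n$ with Parseval's identity $\sum_{\lambda\in\Lambda}\prod_r|\hat{\nu}(\xi_r-\lambda_r)|^2=1$ and a slicing/Ramsey extraction to produce, from a hypothetical spectrum of $\nu^n$, a spectrum (or at least a Parseval frame of exponentials with unit weights) for $\nu$, contradicting the one-dimensional non-spectrality result of Dai-He-Lau. This reduction---showing that spectrality of $\nu^n$ forces spectrality of $\nu$---is the principal technical obstacle of the theorem.
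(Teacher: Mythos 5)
Your product-measure observation is correct: with $\widetilde{M}$ the companion matrix of $x^n+c$ one has $\widetilde{M}^{-(kn+r)}e_n=(-1/c)^{k+1}e_r$ for $1\le r\le n$, so the coordinates of $\sum_j \widetilde{M}^{-j}d_je_n$ do depend on disjoint digit blocks and $\mu_{\widetilde{M},\widetilde{\mathcal D}}=\nu\times\cdots\times\nu$ with $\nu=\mu_{-c,\{0,\dots,q-1\}}$. Your Ramsey argument for the necessity in (i) is therefore sound and is in fact a cleaner packaging of what the paper does: the paper instead shows that when $\gcd(q,\det M)=1$ the zero set collapses to the finite union $\bigcup_{j=1}^n\mathcal Z_j$ of coordinate slabs and then runs an $n$-fold pigeonhole extraction, which is exactly your colouring argument done by hand. (One small point to patch either way: the base $-c$ may be negative, while the cited one-dimensional theorems are stated for positive bases; this is harmless because the zero set $\{k+i/q\}$ of the mask is symmetric, but it should be said.)

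The necessity in (ii) is where your proposal has a genuine gap, which you yourself flag: Lemma \ref{th(3.3)} applied to the splitting $\mu=\mu_{\widetilde M,\widetilde{\mathcal D}_1}*\mu_{\widetilde M,\widetilde{\mathcal D}_2}$ only rules out the particular spectrum of the first factor, not an arbitrary candidate spectrum, and the reduction ``$\nu^n$ spectral $\Rightarrow$ $\nu$ spectral'' that you hope to prove is not a known easy fact and you give no argument for it. The paper avoids this entirely by a different choice of convolution factorization: it peels off a \emph{single finite level}, writing $\mu=\mu_1*\mu_2$ with $\mu_2=\delta_{\widetilde M^{-(n+1)}\widetilde{\mathcal D}_2}$ (a finite atomic measure) and $\mu_1$ the remaining infinite convolution in which only the $(n+1)$-st level digit set is replaced by $\widetilde{\mathcal D}_1$. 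A direct computation, using $\gcd(c',q')=1$ where $q=q'd$, $c=c'd$, $d=\gcd(q,c)$, shows $\mathcal Z(\widehat\mu_2)\subset\mathcal Z(\widehat\delta_{\widetilde M^{-1}\widetilde{\mathcal D}})\subset\mathcal Z(\widehat\mu_1)$, hence $\mathcal Z(\widehat\mu)=\mathcal Z(\widehat\mu_1)$. Consequently \emph{every} orthogonal set of $\mu$ is an orthogonal set of $\mu_1$, and Lemma \ref{th(3.3)} then says no such set can be a spectrum of $\mu=\mu_1*\mu_2$, since neither factor is Dirac. This containment of zero sets is the missing idea; without it (or a proof of your product-to-factor reduction) your argument for (ii) does not close.
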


\begin{proof}
Likewise Theorem \ref{th(3.2)}, we consider the conjugate situation where the matrix
\begin{align*}
        \widetilde{M}:=\begin{bmatrix}
0&1&0&\cdots &0\\
0&0&1&\cdots &0\\
\vdots & \vdots & \vdots & \ddots & \vdots \\
0&0&0&\cdots &1\\
-c&0&0&\cdots &0\\
  \end{bmatrix}=B^{-1}MB
\end{align*}
and digit set $\widetilde{\mathcal {D}}=\{0,1,\dots, q-1\}\widetilde{v}$ where $\widetilde{v}=(0,\dots, 0, 1)^t$  as  in \eqref{eq(3.1)}. Then the sufficiencies of (i) and (ii) come from Theorem \ref{th(3.2)}, we now prove the necessities.

(i) Let $\mathcal {Z}_0:=\{\xi=(\xi_1,\dots, \xi_n)^t\in \mathbb{R}^n: m_{\widetilde{\mathcal D}}(\xi)=0\}$ be the zero set of the mask function $m_{\widetilde{\mathcal D}}$. Then
\begin{align*}
 \mathcal {Z}_0=\left\{\left( \xi_1,\dots,\xi_{n-1},k+\frac{i}{q}\right)^t \in{\mathbb R}^n: \xi_1,\dots, \xi_{n-1} \in \mathbb{R}, k\in\mathbb{Z}, 1\leq i\leq q-1\right\}.
\end{align*}

For $1\leq j\leq n $, by letting $\xi_n=k+\frac{i}{q}$ as above, we observe that
\begin{eqnarray*}
\mathcal {Z}_j:=\widetilde{M}^{*j}(\mathcal {Z}_0)&=& \left\{\left( \begin{matrix}
-c\xi_{n-j+1}\\
\vdots \\
-c\xi_{n-1}\\
-c\xi_n\\
\xi_1\\
\vdots \\
\xi_{n-j}
  \end{matrix}\right): \xi_1,\dots, \xi_{n-1} \in \mathbb{R},\xi_n=k+\frac{i}{q}, k\in {\mathbb Z}, 1\le i\le q-1\right\}\\
  &\subset&\left\{\left( \begin{matrix}
\xi'_1\\
\vdots \\
\xi'_{j-1}\\
k'-\frac{ci}{q}\\
\xi'_{j+1}\\
\vdots \\
\xi'_{n}
  \end{matrix}\right): \xi'_1,\dots,\xi'_{j-1}, \xi'_{j+1},\dots, \xi'_{n} \in \mathbb{R}, k'\in\mathbb{Z}, 1\leq i\leq q-1\right\}.
\end{eqnarray*}

By (\ref{eq(2.2)}), the zero set ${\mathcal Z}(\widehat{\mu}_{\widetilde{M},\widetilde{\mathcal {D}}})$ of the Fourier transform $\widehat{\mu}_{\widetilde{M},\widetilde{\mathcal {D}}}$ can be written as
$${\mathcal Z}(\widehat{\mu}_{\widetilde{M},\widetilde{\mathcal {D}}})=\bigcup_{j=1}^\infty\mathcal {Z}_j.$$

If $\gcd(q, \det(M))=1$, it follows from $c=(-1)^n\det(M)$ that
$$
\{k'-\frac{ci}{q}:k'\in\mathbb{Z}, 1\leq i\leq q-1\}\subset \{k+\frac{i}{q}: k\in\mathbb{Z}, 1\leq i\leq q-1\}.
$$
Hence $\mathcal {Z}_n\subset \mathcal {Z}_0$, which shows that
 \begin{align} \label{eq(3.5)}
{\mathcal Z}(\widehat{\mu}_{\widetilde{M},\widetilde{\mathcal {D}}})=\bigcup_{j=1}^n\mathcal {Z}_j.
\end{align}

We now prove that there are only finite mutually orthogonal  exponentials in $L^2(\mu_{\widetilde{M},\widetilde{\mathcal {D}}})$. If otherwise, there exists a mutually orthogonal set $E(\Lambda)$ with $\Lambda=\{\tau_\ell\}_{\ell=1}^\infty$ for $\mu_{\widetilde{M},\widetilde{\mathcal {D}}}$, we may assume $\tau_1=0$ so that $\Lambda\setminus\{0\}\subset {\mathcal Z}(\widehat{\mu}_{\widetilde{M},\widetilde{\mathcal {D}}})$.  By \eqref{eq(3.5)}, there exists  $1\leq j_0\leq n$ such that $\mathcal {Z}_{j_0}$ contains  infinitely many elements of $\Lambda$. Without loss of generality, we let $\mathcal {Z}_{1}$ contain infinite elements of $\Lambda$, say $\{\tau_{\ell_i}\}_{i=1}^\infty$. According to the form of ${\mathcal Z}_1$ above, there exist an $i_0\in\{1,\dots, q-1\}$ and a subsequence $\{\tau_{\ell_{i_j}}\}_{j=1}^\infty$ satisfying
 \begin{align*}
 \left\{\tau_{\ell_{i_j}}\right\}_{j=1}^\infty\subset\left\{\left(k-\frac{ci_0}{q},\xi_2, \dots, \xi_{n} \right)^t: \xi_2, \dots, \xi_{n}\in \mathbb{R}, k\in \mathbb{Z}\right\}.
\end{align*}
Then for $j\geq 2$, the difference
\begin{align*}
\tau_{\ell_{i_j}}-\tau_{\ell_{i_1}}\in \left\{\left(k_1,\xi'_2, \dots, \xi'_{n} \right)^t: \xi'_2, \dots, \xi'_{n}\in \mathbb{R}, k_1\in \mathbb{Z}\right\}.
\end{align*}
Hence $\tau_{\ell_{i_j}}-\tau_{\ell_{i_1}}\notin\mathcal {Z}_{1}$ for all $j\ge 2$. Since $(\Lambda-\Lambda)\setminus\{0\}\subset {\mathcal Z}(\widehat{\mu}_{\widetilde{M},\widetilde{\mathcal {D}}})$, it follows that
 \begin{align*}
 \left\{\tau_{\ell_{i_j}}-\tau_{\ell_{i_1}}\right\}_{j=2}^\infty\subset\bigcup_{j=2}^n\mathcal {Z}_j.
\end{align*}

By continuing  the above process $n$ times, finally we obtain an infinite sequence of differences $\left\{\tau_{\ell_{m}}-\tau_{\ell^*}\right\}_{m=1}^\infty$ such that
\begin{align*}
\tau_{\ell_{m}}-\tau_{\ell^*}\in\left\{\left(k_1,k_2, \dots, k_n \right)^t: k_i\in \mathbb{Z}, i=1,\dots, n\right\}.
\end{align*}
That means $\tau_{\ell_{m}}-\tau_{\ell^*}\notin {\mathcal Z}(\widehat{\mu}_{\widetilde{M},\widetilde{\mathcal {D}}})$, which is impossible. Therefore, there are only finite  mutually orthogonal exponentials, and $\mu_{\widetilde{M},\widetilde{\mathcal {D}}}$ is not a spectral measure.

(ii)  As $c=(-1)^n\det(M)$, we only need to prove the $\mu_{\widetilde{M},\widetilde{\mathcal {D}}}$ is a non-spectral measure for $\gcd(q, c)=d$ where $1<d<q$. Let $q=q'd$, $c=c'd$, then $\gcd(c',q')=1$.  Denote by
$$\widetilde{\mathcal {D}}=\{0,1,\dots, d-1\}\widetilde{v}\oplus d\{0,1,\dots, q'-1\}\widetilde{v}:=\widetilde{\mathcal {D}}_1\oplus\widetilde{\mathcal {D}}_2.$$ Let
$$
\mu_1=\delta_{\widetilde{M}^{-1}\widetilde{\mathcal {D}}}\ast\cdots\ast\delta_{\widetilde{M}^{-n}\widetilde{\mathcal {D}}}\ast\delta_{\widetilde{M}^{-(n+1)}\widetilde{\mathcal {D}}_1}\ast\delta_{\widetilde{M}^{-(n+2)}\widetilde{\mathcal {D}}}\ast\cdots \ \text{and} \quad \mu_2=\delta_{\widetilde{M}^{-(n+1)}\widetilde{\mathcal {D}}_2}
$$ where $\delta_E=\frac{1}{\# E}\Sigma_{e\in E}\delta_e$ for a finite set $E$ and Dirac mass measure $\delta_e$ at the point $e$, and the infinite convolutions converge in the weak sense. Then we have  $\mu_{\widetilde{M},\widetilde{\mathcal {D}}}=\mu_1\ast\mu_2$.

It can be seen that
$${\mathcal Z}(\widehat{\delta}_{\widetilde{M}^{-1}\widetilde{\mathcal {D}}})=\{(-c\frac{m}{q}, \eta_1,\dots,\eta_{n-1})^t: m\in\mathbb{Z},
q\nmid m, \eta_1,\dots,\eta_{n-1}\in \mathbb{R}\}$$
and
$${\mathcal Z}(\widehat{\mu}_2)=\{(c^2\frac{m'}{dq'}, \eta'_1,\dots,\eta'_{n-1})^t: m'\in\mathbb{Z},
q'\nmid m', \eta'_1,\dots,\eta'_{n-1}\in \mathbb{R}\}.$$
Write $c^2\frac{m'}{dq'}=-c\frac{-cm'}{q}$. Since $\frac{-cm'}{q}=\frac{-cm'}{dq'}=\frac{-c'm'}{q'}$, $\gcd(c',q')=1$ and $q'\nmid m'$, it follows that  $q\nmid -cm'$, and ${\mathcal Z}(\widehat{\mu}_2)\subset {\mathcal Z}(\widehat{\delta}_{\widetilde{M}^{-1}\widetilde{\mathcal {D}}})$. Hence $\mathcal {Z}(\widehat{\mu}_{\widetilde{M},\widetilde{\mathcal {D}}})= {\mathcal Z}(\widehat{\mu}_1)$. This shows that any orthogonal set $E(\Lambda)$ of  $\mu_{\widetilde{M},\widetilde{\mathcal {D}}}$  is an orthogonal set of $\mu_1$, proving that $\mu_{\widetilde{M},\widetilde{\mathcal {D}}}$ is a non-spectral measure by Lemma \ref{th(3.3)}.
\end{proof}

\begin{exam}\label{ex(3.1)}
Let \begin{eqnarray*}
M=\begin{bmatrix}
2&6&4\\
-1&2&2\\
-1&-1&-4
\end{bmatrix}
\end{eqnarray*}  and $\mathcal {D}=\{0,1,\dots,q-1\}v$ where $q\ge 2, v=(0,0,1)^t$. Then $M$ is an expanding matrix and $\{v, Mv, M^2v\}$ is linearly independent. Moreover, the characteristic polynomial of $M$ is $f(x)=x^3+36$. Hence by Theorem \ref{th(3.4)}, we have

(i) $\mu_{M,\mathcal {D}}$ has infinitely many orthogonal exponentials if and only if $\gcd(36,q)>1$;

(ii) $\mu_{M,\mathcal {D}}$ is a spectral measure if and only if $q|36$.
\end{exam}

On the other hand, if the set $\{v, Mv, \dots, M^{n-1}v\}$ is linearly dependent, by using some techniques of linear algebra, we can reduce the dimension and get the similar results as Theorems \ref{th(3.2)} and \ref{th(3.4)} .

\begin{lem} \label{th(3.6)}
Let $v\in \mathbb{Z}^n\setminus \{0\}$ and  $M\in M_n(\mathbb{Z})$. If $\{v, Mv,\dots, M^{n-1}v\}$ is linearly dependent with rank $r<n$, then there exists a unimodular matrix $B\in M_n(\mathbb{Z})$ such that $Bv=(x_1,\dots, x_r, 0,\dots,0)^t$ and
\begin{equation}\label{matrixform}
\widetilde{M}=BMB^{-1}=\begin{bmatrix}
 M_1& C \\
 0& M_2
\end{bmatrix}
\end{equation}
where $M_1\in M_r(\mathbb{Z})$, $M_2\in M_{n-r}(\mathbb{Z})$ and $C\in M_{r,n-r}(\mathbb{Z})$.
\end{lem}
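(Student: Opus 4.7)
The plan is to realize $B^{-1}$ as the change-of-basis matrix obtained from a $\mathbb{Z}$-basis of a suitable $M$-invariant primitive sublattice of $\mathbb{Z}^n$ containing $v$.

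First I would verify that $\{v, Mv, \ldots, M^{r-1}v\}$ itself is linearly independent. Indeed, let $k$ be the smallest index such that $M^k v$ lies in $\mathrm{span}\{v, Mv, \ldots, M^{k-1}v\}$, say $M^k v = \sum_{i=0}^{k-1} a_i M^i v$. Applying $M$ repeatedly shows by induction that every $M^j v$ with $j \ge k$ lies in that same span, so the rank of $\{v, Mv, \ldots, M^{n-1}v\}$ equals $k$; hence $k = r$, proving linear independence of $\{v, Mv, \ldots, M^{r-1}v\}$.

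Next, set $V := \mathrm{span}_{\mathbb{Q}}\{v, Mv, \ldots, M^{r-1}v\} \subset \mathbb{Q}^n$, an $r$-dimensional subspace that is $M$-invariant by the previous step (the only new vector needed is $M \cdot M^{r-1}v = M^r v$, which lies in $V$). Let $L := V \cap \mathbb{Z}^n$, a free $\mathbb{Z}$-module of rank $r$ containing $v$. Since $V$ is $M$-invariant and $M\mathbb{Z}^n \subset \mathbb{Z}^n$, we have $ML \subset L$. The crucial technical point is that $L$ is a \emph{primitive} sublattice of $\mathbb{Z}^n$: the quotient $\mathbb{Z}^n/L$ is torsion-free, because if $kw \in L$ for some integer $k \ne 0$ and $w \in \mathbb{Z}^n$, then $kw \in V$ forces $w \in V$, hence $w \in L$. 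By the elementary divisor theorem (Smith normal form applied to the inclusion $L \hookrightarrow \mathbb{Z}^n$), there is a $\mathbb{Z}$-basis $e_1, \ldots, e_r$ of $L$ that extends to a $\mathbb{Z}$-basis $e_1, \ldots, e_n$ of $\mathbb{Z}^n$.

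Finally, I would let $B^{-1}$ be the integer matrix with columns $e_1, \ldots, e_n$; this is unimodular, hence so is $B$. Since $v \in L$ we can write $v = \sum_{i=1}^r x_i e_i$ with $x_i \in \mathbb{Z}$, so $Bv = (x_1, \ldots, x_r, 0, \ldots, 0)^t$ as required. For $1 \le j \le r$ we have $Me_j \in ML \subset L$, so $Me_j = \sum_{i=1}^r c_{ij} e_i$, which means the $j$-th column of $BMB^{-1}$ vanishes in entries $r+1, \ldots, n$. This yields the block form \eqref{matrixform} with $M_1 = (c_{ij})_{i,j \le r} \in M_r(\mathbb{Z})$, and the blocks $C$ and $M_2$ coming from expanding $Me_j$ for $j > r$ in the basis $e_1, \ldots, e_n$. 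The only non-routine step is the passage from "$L$ is an $M$-invariant sublattice containing $v$" to "a $\mathbb{Z}$-basis of $L$ extends to one of $\mathbb{Z}^n$", which is exactly where primitivity of $L$ (and hence Smith normal form) is essential.
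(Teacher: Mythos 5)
Your proposal is correct, and every step checks out: the minimality argument showing $\{v,Mv,\dots,M^{r-1}v\}$ is independent, the $M$-invariance of $V$ and of $L=V\cap\mathbb{Z}^n$, the primitivity of $L$, and the basis-extension step are all sound, and they do yield a unimodular $B$ with the stated properties. However, your route is genuinely different from the paper's. The paper works directly with the Krylov matrix $A=[M^{r-1}v,\dots,Mv,v]$: it performs unimodular row operations to put $A$ in a Hermite-type triangular form $BA=\left[\begin{smallmatrix}A'\\ 0\end{smallmatrix}\right]$, then computes $BMA$ in two ways and uses the invertibility of $A'$ to force the lower-left block $D$ of $BMB^{-1}$ to vanish; integrality of the blocks is then automatic because $B$ is unimodular. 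You instead pass to the saturation $L=V\cap\mathbb{Z}^n$ of the invariant subspace and invoke the elementary divisor theorem to extend a basis of the primitive sublattice $L$ to a basis of $\mathbb{Z}^n$. Your argument is more conceptual and more flexible (it applies verbatim to any $M$-invariant rational subspace, and makes the integrality of $M_1$, $C$, $M_2$ transparent from $ML\subset L$ rather than from unimodularity alone), while the paper's is more explicit and algorithmic: the row reduction can actually be carried out in examples (as the authors do in Example 3.2), and it produces a $B$ adapted to the specific Krylov vectors. Note also that your $L$ may strictly contain the lattice generated by $v,Mv,\dots,M^{r-1}v$, so the two constructions generally produce different matrices $B$; both satisfy the lemma.
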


\begin{proof}
Let $A=\left[M^{r-1}v,\dots, Mv, v\right]=\left[a_{ij}\right]_{1\leq i\leq n, 1\leq j\leq r}$.  It is well known that by a series of elementary (unimodular) row operations on $A$  which is equivalent to multiple a unimodular $B$ from the left side of $A$, we can get $BA=\begin{bmatrix} A'_{r,r}\\  0_{n-r,r} \end{bmatrix} \in M_{n,r}(\mathbb{Z})$, i.e.,
\begin{align}\label{eq(3.9)}
 BA=B\begin{bmatrix}
a_{11}&a_{12}&\cdots &a_{1r}\\
a_{21}&a_{22}&\cdots &a_{2r}\\
\vdots & \vdots & & \vdots \\
a_{n1}&a_{n2}&\cdots &a_{nr}
  \end{bmatrix}=\begin{bmatrix}
a'_{11}&a'_{12}&\cdots &a'_{1r}\\
0&a'_{22}&\cdots &a'_{2r}\\
\vdots & \vdots & \ddots & \vdots \\
0&0&\cdots &a'_{rr}\\
0&0&\cdots &0\\
\vdots & \vdots &  & \vdots \\
0&0&\cdots &0
  \end{bmatrix}.
\end{align}

Now we show that  the matrix $B$ is the desired one.  As $v=(a_{1r},a_{2r},\dots, a_{nr})^t$, it follows from \eqref{eq(3.9)} that
$Bv=(a'_{1r},\dots a'_{rr},0,\dots,0)^t:=(x_1,\dots, x_r, 0,\dots,0)^t$.

Suppose
\begin{equation*}
BMB^{-1}=\begin{bmatrix}
 M_{r,r}& C_{r,n-r} \\
 D_{n-r,r}& N_{n-r,n-r}
\end{bmatrix}.
\end{equation*}
Then
\begin{equation*}
BM=\begin{bmatrix}
 M_{r,r}& C_{r,n-r} \\
 D_{n-r,r}& N_{n-r,n-r}
\end{bmatrix}B.
\end{equation*}
Multiplying $A=[M^{r-1}v,\dots, Mv, v]$ from both right sides of the above identity, we have
\begin{eqnarray}\label{eq(3.8.0)} \nonumber
BMA&=&\begin{bmatrix}
 M_{r,r}& C_{r,n-r} \\
 D_{n-r,r}& N_{n-r,n-r}
\end{bmatrix}BA\\
&=& \begin{bmatrix}
 M_{r,r}& C_{r,n-r} \\
 D_{n-r,r}& N_{n-r,n-r}
\end{bmatrix}\begin{bmatrix} A'_{r,r}\\  0_{n-r,r} \end{bmatrix}
=\begin{bmatrix}  M_{r,r}A'_{r,r}\\  D_{n-r,r}A'_{r,r} \end{bmatrix}.
\end{eqnarray}

It is known that $\{M^{r-1}v,M^{r-2}v,\dots,Mv,v\}$ is a maximal linear independent group as the rank of $\{M^{n-1}v,M^{n-2}v,\dots,Mv,v\}$ is $r<n$. Then there exist $b_0,b_1,\dots,b_{r-1}\in \mathbb{R}$ such that
$$
M^{r}v=b_0v+b_1Mv+\cdots +b_{r-1}M^{r-1}v
$$
and by \eqref{eq(3.9)},
$$
BM^{r}v=(c_1,\dots, c_r,0,\dots,0)^t
$$
where $c_r=b_0a'_{rr}\neq 0$. Moreover,
\begin{eqnarray}\label{eq(3.9.0)} \nonumber
BMA&=&BM[M^{r-1}v,\dots, Mv, v]=[BM^{r}v,\dots, BM^2v, BMv]\\
&=&\begin{bmatrix}
c_1&a'_{11}& \cdots &a'_{1r-1}\\
\vdots & \vdots & \ddots & \vdots \\
c_{r-1}&0&\cdots &a'_{r-1r-1}\\
c_r&0&\cdots &0\\
0&0&\cdots &0\\
\vdots & \vdots &  & \vdots \\
0&0&\cdots &0
  \end{bmatrix}:= \begin{bmatrix}  Q_{r,r}\\  0_{n-r,r} \end{bmatrix}.
\end{eqnarray}

From \eqref{eq(3.8.0)} and \eqref{eq(3.9.0)}, it follows that $D_{n-r,r}A'_{r,r}=0$, and $D_{n-r,r}=0$ as $A'_{r,r}$ is invertible.  Thus we have
\begin{equation*}
BMB^{-1}=\begin{bmatrix}
 M_{r,r}& C_{r,n-r} \\
 0& N_{n-r,n-r}
\end{bmatrix}.
\end{equation*}
Since $B$ is a unimodular matrix, $B^{-1}$ and $BMB^{-1}$ are also integer matrices.  Therefore $M_{r,r}$, $N_{n-r,n-r}$ and $C_{r,n-r}$
are integer matrices as desired.
\end{proof}

\begin{exam}\label{ex(3.1)}
Let \begin{eqnarray*}
M=\begin{bmatrix}
1&-3&3\\
3&-5&3\\
6&-6&4
\end{bmatrix},  \ \ \  B=\begin{bmatrix}
1&0&0\\
-1&1&0\\
-2&0&1
\end{bmatrix}
\end{eqnarray*} and $v=(1,1,2)^t$, then $Mv=4v$, $Bv=(1,0,0)^t$ and
\begin{eqnarray*}
BMB^{-1}=\begin{bmatrix}
1&0&0\\
-1&1&0\\
-2&0&1
\end{bmatrix}\begin{bmatrix}
1&-3&3\\
3&-5&3\\
6&-6&4
\end{bmatrix}\begin{bmatrix}
1&0&0\\
1&1&0\\
2&0&1
\end{bmatrix}=\begin{bmatrix}
4&-3&3\\
0&-2&0\\
0&0&-2
\end{bmatrix}.
\end{eqnarray*}
\end{exam}

\begin{thm} \label{th(3.7)}
Let  $M\in M_n({\mathbb Z})$ be an expanding matrix  and ${\mathcal D}=\{0,1,\dots, q-1\}v$ where $q\ge 2$ and  $v\in {\mathbb Z}^n\setminus\{0\}$. If $\{v, Mv,\dots, M^{n-1}v\}$ is linearly dependent with rank $r<n$, and $M, M_1$ are expressed as in \eqref{matrixform}, then

(i) $\mu_{M,\mathcal {D}}$ has infinitely many orthogonal exponentials  if $\gcd(q, \det(M_1))>1$;

(ii) $\mu_{M,\mathcal {D}}$ is a spectral measure if $q|\det(M_1)$, where $M_1$ is as in Lemma \ref{th(3.6)};\\
In particular, if the characteristic polynomial of  $M_1$ is $f(x)=x^r+c$, then

(iii) $\mu_{M,\mathcal {D}}$ has infinitely many orthogonal exponentials  if and only if  $\gcd(q, \det(M_1))>1$;

(iv) $\mu_{M,\mathcal {D}}$ is a spectral measure if and only if $q|\det(M_1)$.

\end{thm}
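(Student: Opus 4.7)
The plan is to reduce to the full-rank case by conjugation and then invoke Theorems \ref{th(3.2)} and \ref{th(3.4)}. First I would apply Lemma \ref{th(3.6)} to get a unimodular $B$ such that $\widetilde{M}=BMB^{-1}$ is block upper triangular with diagonal blocks $M_1, M_2$ and $\widetilde{v}=Bv=(v',0)^t$, where $v'=(x_1,\dots,x_r)^t\in\mathbb{Z}^r$. By Lemma \ref{th(2.2)}, it suffices to prove the theorem for $\mu_{\widetilde{M},\widetilde{\mathcal D}}$, where $\widetilde{\mathcal D}=\{0,\dots,q-1\}\widetilde{v}$. The block form of $\widetilde{M}$ shows that $\widetilde{M}^{-1}$ preserves $\mathbb{R}^r\times\{0\}^{n-r}$ and acts there as $M_1^{-1}$; since $\widetilde{\mathcal D}\subset\mathbb{R}^r\times\{0\}^{n-r}$, the measure $\mu_{\widetilde M,\widetilde{\mathcal D}}$ is supported on this subspace and is, under the obvious identification, equal to $\mu_{M_1,\mathcal{D}_1}$ with $\mathcal{D}_1=\{0,\dots,q-1\}v'\subset\mathbb{Z}^r$. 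Correspondingly the Fourier transform factors as
\begin{equation*}
\widehat{\mu}_{\widetilde M,\widetilde{\mathcal D}}(\xi',\xi'')=\widehat{\mu}_{M_1,\mathcal{D}_1}(\xi'),\qquad \xi'\in\mathbb{R}^r,\ \xi''\in\mathbb{R}^{n-r},
\end{equation*}
because $m_{\widetilde{\mathcal D}}(\xi',\xi'')=m_{\mathcal{D}_1}(\xi')$ and the first $r$ coordinates of $\widetilde M^{*-j}(\xi',\xi'')$ are $M_1^{*-j}\xi'$.

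Next I would check that the reduced data satisfies the hypotheses of Theorems \ref{th(3.2)} and \ref{th(3.4)}. Since $M$ is expanding and $\widetilde{M}$ is block upper triangular, the eigenvalues of $M_1$ are eigenvalues of $M$, so $M_1$ is expanding. A short induction shows that, because $\{v,Mv,\dots,M^{n-1}v\}$ has rank $r$, the subset $\{v,Mv,\dots,M^{r-1}v\}$ must already have rank $r$ (otherwise some $M^jv$ with $j<r$ would lie in $\mathrm{span}\{v,\dots,M^{j-1}v\}$, forcing all higher iterates into that span and contradicting the rank being $r$). Transferring through $B$ and using $\widetilde M^j\widetilde v=(M_1^jv',0)^t$, this gives that $\{v',M_1v',\dots,M_1^{r-1}v'\}$ is linearly independent in $\mathbb{R}^r$.

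For the sufficiency parts (i) and (ii), I would apply Theorem \ref{th(3.2)} to $(M_1,\mathcal{D}_1)$ to produce either infinitely many orthogonal exponentials or a spectrum $\Lambda'\subset\mathbb{R}^r$ for $\mu_{M_1,\mathcal D_1}$. Then $\Lambda'\times\{0\}^{n-r}$ serves the corresponding role for $\mu_{\widetilde M,\widetilde{\mathcal D}}$: orthogonality is immediate from the factorization of $\widehat{\mu}_{\widetilde M,\widetilde{\mathcal D}}$, and completeness follows since restriction of $L^2(\mu_{\widetilde M,\widetilde{\mathcal D}})$ to the support identifies it with $L^2(\mu_{M_1,\mathcal{D}_1})$, under which the exponentials $e^{2\pi i\langle(\lambda',0),\cdot\rangle}$ become the exponentials $e^{2\pi i\langle\lambda',\cdot\rangle}$. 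Pulling back by $(B^*)^{-1}$ via Lemma \ref{th(2.2)} finishes (i) and (ii).

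For parts (iii) and (iv), under the extra assumption on the characteristic polynomial of $M_1$, Theorem \ref{th(3.4)} gives the converse statements for $\mu_{M_1,\mathcal{D}_1}$, so I only need the necessity direction to descend from $\mu_{\widetilde M,\widetilde{\mathcal D}}$ to $\mu_{M_1,\mathcal{D}_1}$. Given an orthogonal family $E(\Lambda)$ for $\mu_{\widetilde M,\widetilde{\mathcal D}}$, the projection $\pi\colon(\lambda',\lambda'')\mapsto\lambda'$ is injective on $\Lambda$: if $\lambda_1\ne\lambda_2$ had $\lambda_1'=\lambda_2'$, then $\widehat{\mu}_{\widetilde M,\widetilde{\mathcal D}}(\lambda_1-\lambda_2)=\widehat{\mu}_{M_1,\mathcal{D}_1}(0)=1$, contradicting orthogonality. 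Hence $\pi(\Lambda)$ is an infinite orthogonal set (or, in case (iv), a spectrum by the same $L^2$-identification as above) for $\mu_{M_1,\mathcal{D}_1}$, and Theorem \ref{th(3.4)} gives the conclusion. The main subtlety is the completeness transfer in step three: one must justify that an orthonormal basis for $L^2(\mu_{M_1,\mathcal{D}_1})$ lifts to one for $L^2(\mu_{\widetilde M,\widetilde{\mathcal D}})$ and vice versa, which I handle by observing that these two $L^2$-spaces are naturally isometric via the projection onto the first $r$ coordinates, the support of $\mu_{\widetilde M,\widetilde{\mathcal D}}$.
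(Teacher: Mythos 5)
Your proposal is correct and follows essentially the same route as the paper: conjugate via Lemma \ref{th(3.6)}, factor the Fourier transform as $\widehat{\mu}_{\widetilde M,\widetilde{\mathcal D}}(\xi)=\widehat{\mu}_{M_1,\mathcal D_1}(\xi')$, transfer orthogonal sets and spectra between the two measures by projecting/lifting the first $r$ coordinates, and invoke Theorems \ref{th(3.2)} and \ref{th(3.4)}. You in fact supply a few details the paper leaves implicit (that $M_1$ is expanding, that $\{v',M_1v',\dots,M_1^{r-1}v'\}$ is independent, and the injectivity of the projection on an orthogonal $\Lambda$), all of which check out.
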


\begin{proof}
By Lemma \ref{th(3.6)}, there exists a unimodular matrix $B\in M_n(\mathbb Z)$ such that $Bv=(x_1,\dots, x_r, 0,\dots,0)^t:=\widetilde {v}$ and
\begin{equation*}\label{eq:A_D_star}
\widetilde{M}=BMB^{-1}=\begin{bmatrix}
 M_1& C \\
 0& M_2
\end{bmatrix}.
\end{equation*}
Let $\widetilde{\mathcal {D}}=B{\mathcal D}=\{0,1,\dots,q-1\}\widetilde {v}$. Hence it suffices to consider the conjugate case for $\widetilde{M}$ and $\widetilde{\mathcal {D}}$.  For $j=1,2,\dots$, we have
\begin{equation*}
\widetilde{M}^{*-j}=\begin{bmatrix}
 {M_1^*}^{-j}& 0 \\
\times &  {M_2^*}^{-j}
 \end{bmatrix}.
\end{equation*}
Let $v'=(x_1,\dots,x_r)^t$ and $\mathcal {D}'=\{0,1,\dots,q-1\}v'$. For $\xi=(\xi_1,\dots,\xi_n)^t\in {\mathbb R}^n$, we denote by $\xi'=(\xi_1,\dots,\xi_r)^t$.  It follows from \eqref{eq(2.2)} that
\begin{eqnarray*}
\hat{\mu}_{\widetilde{M},\widetilde{\mathcal {D}}}(\xi)&=&\prod_{j=1}^\infty\frac{1}{|\widetilde{D}|}\sum\limits_{d\in \widetilde{D}}{e^{2\pi i\langle d,{\widetilde{M^{*}}}^{-j}\xi\rangle}}=\prod_{j=1}^\infty\frac{1}{q}\sum_{k=0}^{q-1}{e^{2\pi i\langle k\widetilde{v},{\widetilde{M^{*}}}^{-j}\xi\rangle}}\\
&=&\prod_{j=1}^\infty\frac{1}{q}\sum_{k=0}^{q-1}{e^{2\pi i\langle kv', {M_1^*}^{-j}\xi'\rangle}}=\hat{\mu}_{M_1,{\mathcal {D}}'}(\xi').
\end{eqnarray*}
If $E(\Lambda)$ is an  orthogonal system or an orthonormal basis for $L^2(\mu_{\widetilde{M},\widetilde{\mathcal {D}}})$, we let
$\Lambda_r=\{\lambda'=(\lambda_1,\dots, \lambda_r)^t: \lambda=(\lambda_1,\dots, \lambda_n)^t\in \Lambda\}$, then $E(\Lambda_r)$ is an orthogonal system or  an orthonormal basis for $L^2(\mu_{M_1,\mathcal {D}'})$. Conversely, if $E(\Lambda_r)$ is an orthogonal system or  an orthonormal basis for $L^2(\mu_{M_1,\mathcal {D}'})$, we let
$$
\Lambda:=\left\{\left(\begin{matrix}
\lambda'\\
\varphi(\lambda')
 \end{matrix}\right):\lambda'\in\Lambda_r \right\}
$$
where $\varphi:\Lambda_r \rightarrow \mathbb{R}^{n-r}$ is an arbitrary single-valued function, then $E(\Lambda)$ is an orthogonal system or  an orthonormal basis for $L^2(\mu_{\widetilde{M},\widetilde{\mathcal {D}}})$. Therefore, $\mu_{M_1,\mathcal {D}'}$ and $\mu_{\widetilde{M},\widetilde{\mathcal {D}}}$ have the same spectrality.

By the assumption, $\{v', M_1v', \dots, M_1^{r-1}v'\}$ is linearly independent. It follows from Theorems \ref{th(3.2)} and \ref{th(3.4)} that  (i)-(iv) hold for $\mu_{M_1,\mathcal {D}'}$, hence hold for $\mu_{M,\mathcal {D}}$.
\end{proof}

If the rank of $\{v, Mv,\dots, M^{n-1}v\}$ is $r=1$, then $v$ becomes an eigenvector of $M$ and  $M_1=\ell$ is the corresponding eigenvalue. Indeed, from Lemma \ref{th(3.6)}, it can be seen that $Mv=B^{-1}BMB^{-1}Bv=B^{-1}\widetilde{M}Bv=B^{-1}(\ell x_1,0,\cdots,0)^t=\ell B^{-1}Bv=\ell v$.  By Theorem \ref{th(3.7)},  the following corollary is immediate.

\begin{cor}\label{th(3.8)}
Under the same assumption as Theorem \ref{th(3.7)} with the rank $r=1$, in this case $v$ is an eigenvector of $M$ and  $M_1=\ell$ (as in (\ref{matrixform})) is the corresponding eigenvalue, then

(i) $\mu_{M,\mathcal {D}}$ has infinitely many orthogonal exponentials if and only if $\gcd(q,\ell)>1$;

(ii) $\mu_{M,\mathcal {D}}$ is a spectral measure if and only if  $q|\ell$.
\end{cor}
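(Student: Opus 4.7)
The plan is to derive this corollary directly from Theorem \ref{th(3.7)}(iii)--(iv) by checking that all hypotheses are automatically satisfied when $r=1$. First I would make explicit the identification $M_1=\ell$ and the fact that $v$ is an eigenvector. Since $r=1$, the block $M_1$ from Lemma \ref{th(3.6)} is a $1\times 1$ integer matrix, say $M_1=[\ell]$ with $\ell\in\mathbb{Z}$. By Lemma \ref{th(3.6)} we have $Bv=(x_1,0,\dots,0)^t$, and the block form \eqref{matrixform} gives $\widetilde{M}(x_1,0,\dots,0)^t=(\ell x_1,0,\dots,0)^t$. Therefore
\begin{equation*}
Mv \;=\; B^{-1}\widetilde{M}Bv \;=\; B^{-1}(\ell x_1,0,\dots,0)^t \;=\; \ell B^{-1}Bv \;=\; \ell v,
\end{equation*}
so $v$ is an eigenvector of $M$ associated to the eigenvalue $\ell$, as asserted.

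Next I would observe that the characteristic polynomial of the $1\times 1$ matrix $M_1=[\ell]$ is $f(x)=x-\ell$, which is of the special form $x^r+c$ with $r=1$ and $c=-\ell$. Thus the extra hypothesis needed for parts (iii) and (iv) of Theorem \ref{th(3.7)} is automatically fulfilled. Since $\det(M_1)=\ell$, invoking those two parts yields exactly the two stated equivalences. There is no substantive obstacle: the content of the corollary is simply the rank-one specialization of Theorem \ref{th(3.7)}, and the only thing one has to check by hand is the elementary computation identifying the scalar $M_1$ with the eigenvalue of $M$ at $v$.
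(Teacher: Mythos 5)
Your proof is correct, and it follows the route the paper itself indicates in the remark preceding the corollary: identify the $1\times 1$ block $M_1=[\ell]$ from Lemma \ref{th(3.6)} with an eigenvalue of $M$ via $Mv=B^{-1}\widetilde{M}Bv=B^{-1}(\ell x_1,0,\dots,0)^t=\ell v$, note that the characteristic polynomial $x-\ell$ has the special form $x^{r}+c$ with $r=1$ and $c=-\ell$, and then read off the two equivalences from Theorem \ref{th(3.7)}(iii)--(iv) with $\det(M_1)=\ell$. The paper's source also contains an alternative, self-contained proof of the rank-one case that bypasses Theorem \ref{th(3.7)} entirely: one picks an explicit integer invertible $B$ with $B^{-1}v=(0,\dots,0,1)^t$, uses the eigenvector relation to collapse the Fourier transform to $\hat{\mu}_{\widetilde{M},\widetilde{\mathcal D}}(\xi)=\hat{\mu}_{\ell,\{0,1,\dots,q-1\}}(\xi_n)$, and then quotes the one-dimensional results of \cite{Dai-He-Lai_2013} and \cite{Dai-He-Lau_2014}. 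That direct argument is more elementary (it needs neither Lemma \ref{th(3.6)} nor Theorem \ref{th(3.4)}) and makes transparent that the rank-one case is exactly the classical one-dimensional theorem in disguise; however, as written it asserts that $B^{-1}MB$ is diagonal, which is neither true in general nor actually needed --- only $\widetilde{M}\widetilde{v}=\ell\widetilde{v}$ enters the computation --- and it must separately justify that $\ell$ is an integer of modulus greater than one before the one-dimensional theorems apply. Your derivation avoids these small frictions at no cost, since Theorem \ref{th(3.7)} is already available; the only hands-on step is the eigenvector identification, which you carry out correctly.
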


\begin{exam}\label{ex(3.2)}
Let $M, B$ and $v$ as in Example \ref{ex(3.1)}. Let $\mathcal {D}_1=\{0,1,2,3,4,5\}v$ and $\mathcal {D}_2=\{0,1\}v$, then $\mu_{M,\mathcal {D}_1}$ has infinitely many orthogonal exponentials but not a spectral measure and $\mu_{M,\mathcal {D}_2}$ is a spectral measure by Corollary \ref{th(3.8)}.
\end{exam}

\bigskip
\noindent {\bf Acknowledgements:} Part of this work was done when the  authors were visiting the Chinese University of Hong Kong, they would like to thank  Professor Ka-Sing Lau for his support and valuable discussions.

\end{document}